\documentclass{amsart}
\usepackage{amsmath, amsfonts, amsthm, enumerate, tikz}


\newtheorem{thm}{Theorem}[section]
\newtheorem{cor}[thm]{Corollary}
\newtheorem{lem}[thm]{Lemma}

\theoremstyle{definition}

\theoremstyle{remark}
\newtheorem*{rem}{Remark} 
\theoremstyle{definition}
\newtheorem{ex}[thm]{Example}

\numberwithin{equation}{section}

\begin{document}

\allowdisplaybreaks

\title[Newton-like inequalities for sets of complex numbers]{Families of Newton-like inequalities for sets of self-conjugate complex numbers}


\author{Richard Ellard}
\address{
	Richard Ellard,
	School of Mathematics and Statistics,
	University College Dublin,
	Belfield, Dublin 4, Ireland
}
\email{richardellard@gmail.com}
\thanks{The authors' work was supported by Science Foundation Ireland under Grant 11/RFP.1/MTH/3157.}

\author{Helena \v{S}migoc}
\address{
	Helena \v{S}migoc,
	School of Mathematics and Statistics,
	University College Dublin,
	Belfield, Dublin 4, Ireland
}
\email{helena.smigoc@ucd.ie}

\subjclass[2010]{26D07, 26D15, 30A10}

\keywords{Elementary symmetric functions, Newton's inequalities, $\lambda$-Newton inequalities}

\date{February 2016}

\begin{abstract}
We derive families of Newton-like inequalities involving the elementary symmetric functions of sets of self-conjugate complex numbers in the right half-plane. These are the first known inequalities of this type which are independent of the proximity of the complex numbers to the real axis.
\end{abstract}

\maketitle

\section{Introduction}

The $k$-th elementary symmetric function of the variables $x_1,x_2,\ldots x_n$ is defined by
\begin{align*}
	e_0(x_1,x_2,\ldots x_n) &:= 1, \\
	e_k(x_1,x_2,\ldots x_n) &:= \sum_{1\leq i_1<i_2<\cdots<i_k\leq n} x_{i_1}x_{i_2}\cdots x_{i_k} :\hspace{3mm} k=1,2,\ldots,n.
\end{align*}
It will also be convenient to define $e_k(x_1,x_2,\ldots x_n)=0$ if $k<0$ or $k>n$. In order to state the celebrated Newton's inequalities, it is more convenient to consider the $k$-th \emph{elementary symmetric mean}
\[
	E_k(x_1,x_2,\ldots x_n):=\binom{n}{k}^{-1}e_k(x_1,x_2,\ldots x_n): \hspace{3mm} k=0,1,\ldots,n.
\]
For brevity, we will often write simply $e_k$ or $E_k$ when there is no confusion as to the variables involved.

\begin{thm}\label{thm:Newton'sInequalities}{\bf (Newton's Inequalities)}
	If $\mathcal{X}:=(x_1,x_2,\ldots,x_n)$ is a list of real numbers, then
	\begin{equation}\label{eq:Newton'sInequalities}
		E_k(\mathcal{X})^2\geq E_{k-1}(\mathcal{X})E_{k+1}(\mathcal{X}):\hspace{3mm} k=1,2,\ldots,n-1,
	\end{equation}
	with equality if and only if all of the $x_i$ coincide or both sides vanish.
\end{thm}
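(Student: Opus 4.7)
The plan is to use the classical route via Rolle's theorem, reducing the problem to the discriminant of a real-rooted quadratic. First I would form the generating polynomial
\[
	p(t) = \prod_{i=1}^{n}(t+x_i) = \sum_{j=0}^{n}\binom{n}{j}E_j(\mathcal{X})\, t^{n-j},
\]
whose roots are $-x_1,\ldots,-x_n$, hence all real.

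Next I would apply two operations which preserve real-rootedness: differentiation (justified by Rolle's theorem on a polynomial whose roots are all real) and reciprocation $q(t)\mapsto t^{\deg q}q(1/t)$ (which sends a real-rooted polynomial to a real-rooted one, possibly of smaller degree if the constant term vanishes). The scheme is to differentiate $p$ exactly $n-k-1$ times, which using the identity $\binom{n}{j}(n-j)=n\binom{n-1}{j}$ produces, up to a positive constant, the polynomial
\[
	q(t) = \sum_{j=0}^{k+1}\binom{k+1}{j} E_j(\mathcal{X})\, t^{k+1-j}.
\]
I would then reverse via $t^{k+1}q(1/t)$ and differentiate $k-1$ more times. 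Tracking the binomial and factorial prefactors, the surviving quadratic is a positive multiple of
\[
	E_{k+1}(\mathcal{X})\, t^2 + 2 E_k(\mathcal{X})\, t + E_{k-1}(\mathcal{X}).
\]
Since it arose through real-rootedness-preserving operations, its discriminant is nonnegative, yielding $E_k^2 \geq E_{k-1}E_{k+1}$.

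For the equality clause, I would trace back: equality forces the final quadratic to have a double root, so at each step of the Rolle/reciprocation reduction the intermediate polynomial must itself have a critical point coinciding with a root. Pushing this back to $p$ forces, in the nondegenerate case, all roots to coincide, i.e.\ all $x_i$ equal. The degenerate alternative, where the reciprocation step drops degree, corresponds exactly to the ``both sides vanish'' branch.

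The main obstacle is bookkeeping. The polynomial and coefficient manipulations are elementary, but the identification of the coefficients of the final quadratic with $\tfrac12 E_{k-1},E_k,\tfrac12 E_{k+1}$ (up to a common positive factor) requires careful use of the identities $\binom{n}{j}(n-j)=n\binom{n-1}{j}$ and $\binom{k+1}{k-1}=\tfrac{(k+1)k}{2}$, and the degenerate cases where leading coefficients vanish must be reconciled with the ``both sides vanish'' alternative in the equality statement.
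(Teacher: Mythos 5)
The paper does not prove this theorem: it is quoted as a classical result, with the reader referred to MacLaurin, to Hardy--Littlewood--P\'olya (\S 4.3 for real $x_i$), and to Rosset. Your argument is precisely that standard differential-calculus proof --- differentiate the generating polynomial $n-k-1$ times, reverse, differentiate $k-1$ more times, and read off the discriminant of the surviving quadratic $\tfrac{(k+1)!}{2}(E_{k+1}t^2+2E_kt+E_{k-1})$ --- and the bookkeeping you describe checks out, including the degenerate case $E_{k+1}=0$, where the inequality is trivial. The only part that is still a sketch is the equality clause: to push a double root of the final quadratic back to $p$ you need the standard multiplicity lemma (for a real-rooted $f$, any root of $f'$ of multiplicity $m\geq 2$ is a root of $f$ of multiplicity $m+1$, since the Rolle roots in the gaps between distinct roots of $f$ are simple), which then forces $\tilde q=E_{k+1}(t-c)^{k+1}$ and hence all $x_i$ equal; the branch $E_{k+1}=0$ gives $E_k=0$ and lands in the ``both sides vanish'' alternative. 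With that lemma stated, the proof is complete and consistent with the sources the paper cites.
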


Theorem \ref{thm:Newton'sInequalities} is a consequence of a rule stated (without proof) by Newton \cite{Newton} which gives a lower bound on the number of nonreal roots of a real polynomial; however, since Newton did not give a proof of his rule, the proof of Theorem \ref{thm:Newton'sInequalities} is due to MacLaurin \cite{Maclaurin}. For an inductive proof in the case where $x_1,x_2,\ldots x_n$ are nonnegative, see \cite[\S 2.22]{Inequalities}. For a proof by differential calculus in the case where $x_1,x_2,\ldots x_n$ are real, see \cite[\S 4.3]{Inequalities}, or alternatively \cite{Rosset}.

Several reformulations/generalisations of Newton's inequalities have been given over the years, for example in \cite{Whiteley,Menon,Rosset} and more recently in \cite{Niculescu,Simic}. The relationship between Newton's inequalities and matrix spectra have been studied in \cite{Holtz(NewtonInequalities),Johnson(NewtonInequalities)}. Newton-like inequalities for certain families of complex numbers have been studied in \cite{Monov,Xu2008,Xu2009}. 


In this paper, we give families of Newton-like inequalities for sets of self-conjugate complex numbers with nonnegative real parts and show that the given inequalities are optimal. These inequalities are of particular interest, since no further conditions on the set of complex numbers under consideration are imposed. In general, a sequence of nonnegative numbers $\{E_k\}$ is said to be \emph{log-concave} if $E_k^2\geq E_{k-1}E_{k+1}$ for all $k$. Therefore, the study of Newton-like inequalities for sets of complex numbers is further motivated by the literature on log-concave sequences (see \cite{Stanley,Brenti}).

Note that (\ref{eq:Newton'sInequalities}) is equivalent to
\[
	e_k(\mathcal{X})^2 \geq \frac{k+1}{k}\frac{n-k+1}{n-k}e_{k-1}(\mathcal{X})e_{k+1}(\mathcal{X}),
\]
which is stronger than
\[
	e_k(\mathcal{X})^2 \geq e_{k-1}(\mathcal{X})e_{k+1}(\mathcal{X}).
\]
It is well-known that (\ref{eq:Newton'sInequalities}) is equivalent to
\begin{equation}\label{eq:GeneralisedNewton'sInequalities}
	E_kE_l\geq E_{k-1}E_{l+1} \hspace{2mm} : \hspace{6mm} 1\leq k\leq l\leq n-1,
\end{equation}
provided $E_1,E_2,\ldots,E_n\geq0$ and the sequence $E_1,E_2,\ldots,E_n$ has no internal zeros, namely if $k<l$, then $E_k,E_l>0$ implies $E_i>0$ for all $k<i<l$. This follows from the fact that
\[
	E_k^2E_{k+1}^2\cdots E_l^2\geq(E_{k-1}E_{k+1})(E_kE_{k+2})\cdots(E_{l-1}E_{l+1}).
\]
In particular, if the $x_i$ are nonnegative, then (\ref{eq:GeneralisedNewton'sInequalities}) holds.

Now suppose $\mathcal{X}:=(x_1,x_2,\ldots,x_n)$ is a list of complex numbers. It is natural to assume that $\mathcal{X}$ is self-conjugate (any complex numbers occur in complex-conjugate pairs), since this ensures that each $e_i(\mathcal{X})$ is a real number. We will also assume that the $x_i$ have nonnegative real parts, since this guarantees that $e_i(\mathcal{X})\geq0$, $i=0,1,\ldots,n$.

In general, Newton's inequalities (\ref{eq:Newton'sInequalities}) do not hold under these assumptions; however, Monov \cite{Monov} showed that a weaker version of Theorem \ref{thm:Newton'sInequalities} does hold. For $0\leq\lambda\leq1$, define the wedge
\[
	\Omega:=\{z\in\mathbb{C}:|\,\mathrm{arg}(z)\,|\leq\cos^{-1}\sqrt{\lambda}\}.
\]

\begin{thm}{\bf\cite{Monov}}\label{thm:Monov}
	Let $\mathcal{X}:=(x_1,x_2,\ldots,x_n)$ be a list of self-conjugate variables in $\Omega$. Then
	\begin{equation}\label{eq:LambdaNewtonInequalities}
		E_k(\mathcal{X})^2\geq \lambda E_{k-1}(\mathcal{X})E_{k+1}(\mathcal{X}):\hspace{3mm} k=1,2,\ldots,n-1.
	\end{equation}
\end{thm}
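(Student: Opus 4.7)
I would prove this by induction on $n$, using three ingredients: the convexity of the wedge $\Omega$, differentiation (Gauss--Lucas), and the symmetry of $\Omega$ under reciprocation of roots.

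The base case $n = 2$ reduces to the single inequality $E_1^2 \geq \lambda E_2$. A self-conjugate pair in $\Omega$ is either two nonnegative reals (so the usual Newton inequality $E_1^2 \geq E_2 \geq \lambda E_2$ applies) or a complex-conjugate pair $a \pm bi$ with $a \geq 0$ and $a^2 \geq \lambda(a^2 + b^2)$. In the latter case $E_1 = a$ and $E_2 = a^2 + b^2$, so the required inequality is exactly the defining wedge condition.

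For the inductive step, let $p(t) = \prod_{i=1}^n (t - x_i)$, set $q(t) = p'(t)/n$, and let $\mathcal{Y}$ be the list of roots of $q$. Matching coefficients of $p$ and $q$ gives $e_k(\mathcal{Y}) = \frac{n-k}{n} e_k(\mathcal{X})$, and a short factorial manipulation shows $E_k(\mathcal{Y}) = E_k(\mathcal{X})$ for $k = 0, 1, \ldots, n-1$. Because $\Omega$ is the intersection of two half-planes through the origin and thus convex, the Gauss--Lucas theorem places $\mathcal{Y}$ in $\Omega$; and because $q$ has real coefficients, $\mathcal{Y}$ is self-conjugate. Applying the inductive hypothesis to $\mathcal{Y}$ (of length $n-1$) yields the inequality at $k = 1, 2, \ldots, n-2$ for $\mathcal{X}$.

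Only $k = n - 1$ remains. If some $x_i = 0$, then $E_n(\mathcal{X}) = 0$ and the inequality collapses to $E_{n-1}(\mathcal{X})^2 \geq 0$, which is automatic since $E_{n-1}(\mathcal{X})$ is a nonnegative real. Otherwise, since $\arg(1/z) = -\arg(z)$, the wedge $\Omega$ is closed under $z \mapsto 1/z$, so $\mathcal{Y}' := (1/x_1, \ldots, 1/x_n)$ is self-conjugate and lies in $\Omega$. A direct calculation gives $E_k(\mathcal{Y}') = E_{n-k}(\mathcal{X})/E_n(\mathcal{X})$, so applying the $k = 1$ case (already established at length $n$ in the previous paragraph, since $n \geq 3$) to $\mathcal{Y}'$ rearranges at once into $E_{n-1}(\mathcal{X})^2 \geq \lambda E_{n-2}(\mathcal{X}) E_n(\mathcal{X})$.

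The main obstacle is not any single computation but rather orchestrating the reductions correctly: the differentiation step only covers $k \leq n-2$, so the top index must be treated by a separate symmetry, and one must verify that $\Omega$ is preserved under both of the operations used (convex hulls, via Gauss--Lucas, and reciprocation). Once those closure properties are in hand, the proof is a short induction resting on the two-variable base case, which is itself nothing more than the definition of the wedge.
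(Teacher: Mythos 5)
This is Monov's theorem, which the paper quotes from \cite{Monov} without reproducing a proof, so there is no in-paper argument to compare against; judged on its own terms, your proof is correct and complete. All the ingredients check out: the coefficient identity $e_k(\mathcal{Y})=\frac{n-k}{n}e_k(\mathcal{X})$ does give $E_k(\mathcal{Y})=E_k(\mathcal{X})$ for $k\le n-1$; the wedge $\Omega$ has half-angle $\cos^{-1}\sqrt{\lambda}\le\pi/2$, hence is convex and Gauss--Lucas applies; $\Omega$ and self-conjugacy are preserved under $z\mapsto 1/z$; and $E_k(\mathcal{Y}')=E_{n-k}(\mathcal{X})/E_n(\mathcal{X})$ turns the $k=1$ inequality for $\mathcal{Y}'$ into the $k=n-1$ inequality for $\mathcal{X}$. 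The orchestration is also sound and non-circular: within the length-$n$ step, differentiation first yields $k\le n-2$ for \emph{all} self-conjugate length-$n$ lists in $\Omega$, and only then is the $k=1$ instance applied to the reciprocal list. Two points deserve an explicit sentence in a written version. First, in the reciprocation step you divide and then multiply by $E_n(\mathcal{X})$, so you should note that $E_n(\mathcal{X})=\prod x_i>0$ when no $x_i$ vanishes: each conjugate pair contributes $a^2+b^2>0$ and each unpaired nonzero real in $\Omega$ is positive. Second, the base case should also record that $E_2\ge 0$ for two nonnegative reals so that $E_1^2\ge E_2\ge\lambda E_2$. This is essentially the classical differentiate-and-invert proof of Newton's inequalities adapted to the wedge; Monov's published argument instead establishes the top-index case $k=n-1$ directly and reduces the others to it by repeated differentiation, so your route is a legitimate variant rather than a transcription.
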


Theorem \ref{thm:Monov} was generalised by Xu \cite{Xu2008,Xu2009}:

\begin{thm}{\bf\cite{Xu2008,Xu2009}}
	Let $\mathcal{X}:=(x_1,x_2,\ldots,x_n)$ be a list of self-conjugate variables in $\Omega$. Then
	\begin{equation}\label{eq:GeneralisedLambdaNewtonInequalities}
		E_k(\mathcal{X})E_l(\mathcal{X})\geq\lambda E_{k-1}(\mathcal{X})E_{l+1}(\mathcal{X}) \: : \hspace{4mm} 1\leq k\leq l\leq n-1.
	\end{equation}
\end{thm}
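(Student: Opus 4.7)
A naive approach would multiply Monov's inequality (\ref{eq:LambdaNewtonInequalities}) across the consecutive indices $k,k+1,\ldots,l$ and telescope; this yields (\ref{eq:GeneralisedLambdaNewtonInequalities}) only with $\lambda^{l-k+1}$ in place of $\lambda$, so a different strategy is required. My plan is to reduce the general $(k,l)$ case to a single extremal instance on a polynomial of smaller degree via operations that preserve membership of $\Omega$, and then to settle that instance directly using the wedge condition and Cauchy--Schwarz.

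\textbf{Step 1 (Reduction).} Write $p(t) = \prod_{i=1}^{n}(t-x_{i}) = \sum_{j=0}^{n}(-1)^{j}\binom{n}{j}E_{j}(\mathcal{X})\,t^{n-j}$. I would use two operations on monic polynomials with self-conjugate roots in $\Omega$: \emph{differentiation} and \emph{reversal} $p(t)\mapsto t^{n}p(1/t)/((-1)^{n}e_{n})$ (the latter valid when $e_{n}\neq 0$). A direct calculation shows that $p^{(r)}(t)$, suitably rescaled, is monic of degree $n-r$ with leading $n-r+1$ elementary symmetric means $E_{0},\ldots,E_{n-r}$ of $\mathcal{X}$, while reversal sends $E_{j}$ to $E_{n-j}/E_{n}$. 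The Gauss--Lucas theorem combined with convexity of $\Omega$ handles differentiation, and the identity $|\arg(1/z)| = |\arg z|$ handles reversal, so both operations keep the roots self-conjugate and inside $\Omega$. Composing ``differentiate $n{-}l{-}1$ times, reverse, differentiate $k{-}1$ times, reverse again'' yields a self-conjugate polynomial of degree $N := l-k+2$, with roots in $\Omega$, whose means satisfy $\tilde{E}_{j} = E_{k-1+j}(\mathcal{X})/E_{k-1}(\mathcal{X})$ for $j=0,\ldots,N$. The target (\ref{eq:GeneralisedLambdaNewtonInequalities}) then rearranges to $\tilde{E}_{1}\tilde{E}_{N-1} \geq \lambda\tilde{E}_{0}\tilde{E}_{N}$, i.e.\ the special case $k=1$, $l=N-1$ on the reduced list. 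Degenerate cases in which some $E_{j}$ in the block $E_{k-1},\ldots,E_{l+1}$ vanishes are handled separately: Monov's theorem itself forces the ``no internal zeros'' property, and then both sides of (\ref{eq:GeneralisedLambdaNewtonInequalities}) vanish.

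\textbf{Step 2 (Base case).} For a self-conjugate list $\mathcal{Y}=(y_{1},\ldots,y_{N})$ in $\Omega$ with $e_{N}(\mathcal{Y})\neq 0$, the required inequality reduces (upon dividing by $e_{N}>0$) to $\bigl(\sum_{i}y_{i}\bigr)\bigl(\sum_{i}1/y_{i}\bigr) \geq \lambda N^{2}$. Both factors are real since $\mathcal{Y}$ and $\{1/y_{i}\}$ are self-conjugate, and the wedge condition $\mathrm{Re}(y_{i})\geq\sqrt{\lambda}\,|y_{i}|$ gives
\[
\sum_{i}y_{i} = \sum_{i}\mathrm{Re}(y_{i}) \geq \sqrt{\lambda}\sum_{i}|y_{i}|, \qquad \sum_{i}\frac{1}{y_{i}} = \sum_{i}\frac{\mathrm{Re}(y_{i})}{|y_{i}|^{2}} \geq \sqrt{\lambda}\sum_{i}\frac{1}{|y_{i}|}.
\]
Multiplying these estimates and applying Cauchy--Schwarz to the positive reals $|y_{i}|$ completes the argument.

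\textbf{Anticipated obstacle.} The principal subtlety lies in Step~1: verifying that the composition of differentiation and reversal operations does indeed deliver exactly the block $E_{k-1},\ldots,E_{l+1}$ in the correct order and normalisation, while simultaneously tracking the invariance of $\Omega$ and the non-vanishing conditions needed for reversal. Once this bookkeeping is secured, Step~2 is a short and essentially mechanical Cauchy--Schwarz computation.
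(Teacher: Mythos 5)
The paper does not actually prove this theorem---it is quoted from Xu's papers \cite{Xu2008,Xu2009}---so there is no internal proof to compare against; I have therefore checked your argument on its own terms, and it is correct. The bookkeeping in Step 1 works: since $\binom{n}{j}(n-j)=n\binom{n-1}{j}$, the normalised derivative of $p$ carries the means $E_0,\ldots,E_{n-1}$ unchanged; Gauss--Lucas applies because $\Omega$ is a convex cone and differentiation preserves realness of the coefficients; reversal indeed sends $E_j$ to $E_{n-j}/E_n$ and maps roots $z\mapsto 1/z$, which preserves $\Omega$; and composing the four operations yields a degree-$N$ self-conjugate polynomial with roots in $\Omega$ and means $E_{k-1+j}/E_{k-1}$, so the target becomes exactly $\tilde E_1\tilde E_{N-1}\geq\lambda\tilde E_0\tilde E_N$. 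Step 2 then settles this correctly: both factors $\sum_i\mathrm{Re}(y_i)$ and $\sum_i\mathrm{Re}(y_i)/|y_i|^2$ are nonnegative, the wedge condition gives the factor $\lambda$, and Cauchy--Schwarz (or AM--HM) gives $\bigl(\sum_i|y_i|\bigr)\bigl(\sum_i 1/|y_i|\bigr)\geq N^2$. Two small points in your treatment of the degenerate cases deserve tightening. First, the only obstruction to running the reduction is the vanishing of $E_{l+1}$ (first reversal) or of $E_{k-1}$ (second reversal); in either case the right-hand side of (\ref{eq:GeneralisedLambdaNewtonInequalities}) is zero while the left-hand side is nonnegative (all $e_j\geq0$ for self-conjugate lists in the closed right half-plane), so these cases are immediate and you never need the ``no internal zeros'' property at all. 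Your phrasing ``both sides vanish'' is not quite right---when $E_{l+1}=0$ but $E_kE_l>0$ only the right-hand side vanishes---and the claim that Monov's inequalities alone force no internal zeros is also not immediate (they do not by themselves exclude a pattern such as $E_{j-1}>0$, $E_j=E_{j+1}=0$, $E_{j+2}>0$); fortunately neither assertion is needed. Second, note that when $E_{k-1},E_{l+1}>0$ the reduced list automatically has nonzero roots, so the division by $e_N>0$ in Step 2 is legitimate. With those cosmetic repairs this is a complete and pleasantly short proof of the cited result, arguably more transparent than a telescoping or induction on $n$.
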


The inequalities in (\ref{eq:LambdaNewtonInequalities}) are known as the \emph{$\lambda$-Newton inequalities} and those in (\ref{eq:GeneralisedLambdaNewtonInequalities}) are known as the \emph{generalised $\lambda$-Newton inequalities}.

Note that the strength of the inequalities in (\ref{eq:GeneralisedLambdaNewtonInequalities}) depends on the proximity of the $x_i$ to the real axis, via the parameter $\lambda$. In particular, if the $x_i$ are all real, then (\ref{eq:GeneralisedLambdaNewtonInequalities}) reduces to Newton's inequalities. On the other hand, if any of the $x_i$ are purely imaginary, then (\ref{eq:GeneralisedLambdaNewtonInequalities}) reduces to the trivial inequality $E_kE_l\geq0$. In 
this paper we develop inequalities of the form
\begin{equation}\label{eq:GeneralFormInequalities}
	e_k(\mathcal{X})e_l(\mathcal{X})\geq C e_{k-h}(\mathcal{X})e_{l+h}(\mathcal{X}),
\end{equation}
where the constant $C$ is independent of $x_1,x_2,\ldots,x_n$. Specifically, we will prove:

\begin{thm}\label{thm:InequalitiesSummary}
	Let $\mathcal{X}:=(x_1,x_2,\ldots,x_n)$ be a self-conjugate list of  complex numbers with nonnegative real parts. Then for all $k\leq l$, the following inequalities hold:
	\begin{enumerate}
		\item[\textup{(i)}] $e_{2k}(\mathcal{X})e_{2l}(\mathcal{X})\geq \frac{(l+1) (\lfloor n/2 \rfloor-k+1)}{k (\lfloor n/2 \rfloor-l)}e_{2k-2}(\mathcal{X})e_{2l+2}(\mathcal{X})$;
		\item[\textup{(ii)}] $e_{2k+1}(\mathcal{X})e_{2l+1}(\mathcal{X})\geq \frac{(l+1) (\lceil n/2 \rceil-k)}{k (\lceil n/2 \rceil-l-1)}e_{2k-1}(\mathcal{X})e_{2l+3}(\mathcal{X})$;
		\item[\textup{(iii)}] $e_{2k-1}(\mathcal{X})e_{2l}(\mathcal{X})\geq e_{2k-2}(\mathcal{X})e_{2l+1}(\mathcal{X})$;
		\item[\textup{(iv)}] $e_{2k}(\mathcal{X})e_{2l+1}(\mathcal{X})\geq e_{2k-1}(\mathcal{X})e_{2l+2}(\mathcal{X})$.
	\end{enumerate}
	Furthermore, if all real numbers in $\mathcal{X}$ appear with even multiplicity, then
	\begin{enumerate}
		\item[\textup{(v)}] $e_{2k}(\mathcal{X})e_{2l}(\mathcal{X})\geq \sqrt{\dfrac{l(n-2k)}{k(n-2l)}}e_{2k-1}(\mathcal{X})e_{2l+1}(\mathcal{X})$.
	\end{enumerate}
\end{thm}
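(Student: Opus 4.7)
My plan is to encode $\mathcal{X}$ in the generating polynomial
\[
P(u) := \prod_{i=1}^n (1 + x_i u) = \sum_{k=0}^n e_k(\mathcal{X})\,u^k,
\]
which has real non-negative coefficients by self-conjugacy, and to study its even--odd decomposition $P(u) = E(u^2) + u\,O(u^2)$ with $E(s) := \sum_k e_{2k}(\mathcal{X})\,s^k$ and $O(s) := \sum_k e_{2k+1}(\mathcal{X})\,s^k$. Since each root of $P$ has the form $-1/x_i$ with $\mathrm{Re}(-1/x_i) = -\mathrm{Re}(x_i)/|x_i|^2 \leq 0$, the polynomial $P$ is weakly Hurwitz. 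The Hermite--Biehler theorem (in its weak form, via a small perturbation of any $x_i$ lying on the imaginary axis) then forces $E(s)$ and $O(s)$ each to have only real non-positive roots and these roots to interlace. Hence there exist non-negative reals $\mathbf{a} = (a_1,\ldots,a_M)$ and $\mathbf{b} = (b_1,\ldots,b_{M'})$ with $M = \lfloor n/2\rfloor$ and $M' = \lceil n/2\rceil - 1$ such that
\[
E(s) = \prod_{j=1}^M(1+a_j s), \qquad O(s) = e_1(\mathcal{X})\prod_{j=1}^{M'}(1+b_j s),
\]
and $\mathbf{a}, \mathbf{b}$ interlace; equivalently, $e_{2k}(\mathcal{X}) = e_k(\mathbf{a})$ and $e_{2k+1}(\mathcal{X}) = e_1(\mathcal{X})\,e_k(\mathbf{b})$.

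With this reduction in place, parts (i) and (ii) follow immediately from the generalised Newton inequality~(\ref{eq:GeneralisedNewton'sInequalities}) applied to the non-negative lists $\mathbf{a}$ and $\mathbf{b}$, respectively: the stated coefficients are precisely the ratios $\binom{M}{k}\binom{M}{l}/[\binom{M}{k-1}\binom{M}{l+1}]$ with $M = \lfloor n/2\rfloor$ for (i) and with $M' = \lceil n/2\rceil - 1$ for (ii).

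For (iii) and (iv), one may assume $e_1(\mathcal{X}) > 0$ (otherwise both sides vanish). After cancelling the common factor $e_1(\mathcal{X})$, the inequalities become
\[
e_{k-1}(\mathbf{b})\,e_l(\mathbf{a}) \geq e_{k-1}(\mathbf{a})\,e_l(\mathbf{b}) \quad\text{and}\quad e_k(\mathbf{a})\,e_l(\mathbf{b}) \geq e_{k-1}(\mathbf{b})\,e_{l+1}(\mathbf{a}),
\]
valid for $k \leq l$. These state, respectively, that the map $j \mapsto e_j(\mathbf{a})/e_j(\mathbf{b})$ is non-decreasing and that $j \mapsto e_{j+1}(\mathbf{a})/e_j(\mathbf{b})$ is non-increasing. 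I would derive both from the interlacing $a_1 \geq b_1 \geq a_2 \geq b_2 \geq \cdots$ by expanding each difference as a sum indexed by common subsets of indices, with terms of the form $(a_i - b_i)\cdot (\text{non-negative product})$, and showing that each contribution is non-negative. This monotonicity of ratios for interlacing sequences is classical and admits a short inductive proof on $M$ using a single-pair exchange.

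Part (v) is the only item that cannot be deduced from the $\mathbf{a}, \mathbf{b}$ framework alone, since it relates $e_k(\mathbf{a})e_l(\mathbf{a})$ to $e_1(\mathcal{X})^2 e_{k-1}(\mathbf{b})e_l(\mathbf{b})$ with no direct cancellation. The hypothesis that real entries appear with even multiplicity forces $n = 2N$ and $P(u) = \prod_{j=1}^N(1+\alpha_j u + \beta_j u^2)$ with $\alpha_j = 2\mathrm{Re}(z_j), \beta_j = |z_j|^2$, and $\alpha_j^2 \leq 4\beta_j$. This factorisation lets me write each $e_j(\mathcal{X})$ as a sum over disjoint pairs $(S, T) \subset [N]$ with $|S|+2|T| = j$ and $|S| \equiv j \pmod{2}$, the summand being $\prod_{j\in S}\alpha_j \prod_{j\in T}\beta_j$. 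I would then apply the Cauchy--Schwarz inequality to $e_{2k-1}(\mathcal{X})\,e_{2l+1}(\mathcal{X})$, using the bound $\alpha_j \leq 2\sqrt{\beta_j}$ to exchange a factor of $\alpha_j$ from each odd-sum representation for a $\sqrt{\beta_j}$ that recombines into terms comprising $e_{2k}(\mathcal{X})\,e_{2l}(\mathcal{X})$. The main technical hurdle is to match the combinatorial constant arising from Cauchy--Schwarz to the square-root factor $\sqrt{l(n-2k)/(k(n-2l))}$; I expect this to emerge as the geometric mean of two different binomial ratios comparing $\binom{2N}{2k-1}, \binom{2N}{2k}$ and $\binom{2N}{2l+1}, \binom{2N}{2l}$, reflecting the asymmetric positions of $k$ and $l$.
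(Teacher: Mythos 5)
Your reduction for (i) and (ii) is essentially the paper's own: Lemma \ref{lem:Even/Odd} (a Hermite--Biehler-type statement) plus the generalised Newton inequalities applied to the real non-negative lists $\mathbf{a}$ and $\mathbf{b}$, and the binomial ratios you identify are the correct ones. For (iii) and (iv) you depart from the paper, which proves Theorem \ref{thm:DifferentParity} by induction on $n$, showing that $e_ke_l-e_{k-1}e_{l+1}$ (together with the companion difference $e_ke_l-e_{k-2}e_{l+2}$) is a polynomial in the real and imaginary parts with all coefficients positive. Your route through the interlacing of $\mathbf{a}$ and $\mathbf{b}$ is attractive --- the inequalities you need are precisely the order-two minors of the Hurwitz matrix of $\prod(x+x_i)$, whose total non-negativity for quasi-stable polynomials is a known theorem --- but as written it rests on two unproved assertions: that the interlacing runs in the direction $a_1\ge b_1\ge a_2\ge\cdots$ (Lemma \ref{lem:Even/Odd} asserts only that the roots interlace, not in which order), and the monotone-ratio lemma itself, which you call classical but neither prove nor cite. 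Both are fillable, but neither is immediate, and the degenerate cases ($e_1(\mathcal{X})=0$, vanishing $e_j(\mathbf{b})$) need explicit handling.

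The genuine gap is (v), which is the technical heart of the theorem. Your plan --- expand over pairs $(S,T)$, invoke $\alpha_j\le 2\sqrt{\beta_j}$ and Cauchy--Schwarz, and hope the constant ``emerges'' --- is not a proof, and you say so yourself (``the main technical hurdle is to match the combinatorial constant''). There is a concrete reason to doubt it works as stated: for $k=l$ the claim is $e_{2k}^2\ge e_{2k-1}e_{2k+1}$ with constant exactly $1$, and equality is attained (see Section \ref{sec:Optimality}) by lists mixing purely imaginary pairs with a repeated real number; the bound $\alpha_j\le2\sqrt{\beta_j}$ is far from tight on the imaginary pairs, so generic Cauchy--Schwarz bookkeeping will lose a constant. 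The paper instead proves the $k=l$ case (Theorem \ref{thm:e2k2}) by exhibiting an explicit non-negative lower bound $\Theta$ for $e_{2k}^2-e_{2k-1}e_{2k+1}$, running a positivity-of-coefficients induction, and reducing the one problematic family of monomials to the non-negativity of an alternating binomial sum; the general case $k\le l$ then follows from the case $k=l$ combined with (\ref{eq:OddStatement}) via a geometric-mean step (Corollary \ref{cor:e2ke2l}) --- which is where the square-root constant actually comes from, not from the binomial ratios you conjecture. You would need to supply an argument of comparable substance to close this part.
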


In many cases, the inequalities given in Theorem \ref{thm:InequalitiesSummary} are stronger than the corresponding generalised $\lambda$-Newton inequalities (see Section \ref{sec:Optimality}).

\section{New Newton-like inequalities for complex numbers}\label{sec:NewNewtonLikeInequalities}

The following simple example illustrates that, in some cases, the best-possible constant in (\ref{eq:GeneralFormInequalities}) is $C=0$:

\begin{ex}\label{ex:i}
	Consider the list $\mathcal{X}:=(i,-i,i,-i,\ldots,i,-i)$ of length $2m$. We have
	\begin{gather*}
		e_{2i}(\mathcal{X}) = \binom{m}{i} \hspace{2mm} : \hspace{6mm} i=0,1,\ldots,m,\\
		e_{2i+1}(\mathcal{X}) = 0 \hspace{2mm} : \hspace{6mm} i=0,1,\ldots,m-1.
	\end{gather*}
	This example shows us that if $k$, $l$ and $h$ are all odd, then we are forced to choose $C=0$ in (\ref{eq:GeneralFormInequalities}).
\end{ex}

If $k$ and $l$ have the same parity and $h$ is even, the constant C is best-expressed by normalising the elementary symmetric functions in a new way: let us define
\begin{align*}
	P_{2k}(\mathcal{X})&:=\binom{\lfloor n/2 \rfloor}{k}^{-1}e_{2k}(\mathcal{X}) \: : \hspace{4mm} k=0,1,\ldots,\lfloor n/2 \rfloor, \\
	P_{2k+1}(\mathcal{X})&:=\binom{\lceil n/2 \rceil-1}{k}^{-1}e_{2k+1}(\mathcal{X}) \: : \hspace{4mm} k=0,1,\ldots,\lceil n/2 \rceil-1.
\end{align*}

We will require a lemma which appears as Problem 743 in \cite{PHA}:

\begin{lem}\label{lem:Even/Odd}{\bf\cite{PHA}}
	Suppose that the real parts of all roots of the real polynomial $f(x)=x^n+a_1x^{n-1}+\cdots+a_n$ are nonnegative. Then the roots of the polynomials
	\[
		x^n-a_2x^{n-2}+a_4x^{n-4}-\cdots
	\]
	and
	\[
		a_1x^{n-1}-a_3x^{n-3}+a_5x^{n-5}-\cdots
	\]
	are real and interlace.
\end{lem}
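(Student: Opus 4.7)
The plan is to reduce the lemma to the (closed) Hermite--Biehler theorem by a single change of variable. Expanding $(ix)^{n-k}=i^{n-k}x^{n-k}$ and using $i^{-k}\in\{1,-i,-1,i\}$ cyclically gives the identity
\begin{equation*}
i^{-n}f(ix)=\bigl(x^n-a_2x^{n-2}+a_4x^{n-4}-\cdots\bigr)-i\bigl(a_1x^{n-1}-a_3x^{n-3}+\cdots\bigr)=P(x)-iQ(x),
\end{equation*}
where $P$ and $Q$ denote precisely the two polynomials whose roots we want to analyse.

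Next I would locate the roots of $g(z):=i^{-n}f(iz)$. If $r_1,\ldots,r_n$ are the roots of $f$, then the roots of $g$ are $-ir_1,\ldots,-ir_n$, and the hypothesis $\Re r_j\ge 0$ translates exactly into $\Im(-ir_j)=-\Re r_j\le 0$. Thus every zero of $P(z)-iQ(z)$ lies in the closed lower half-plane, and by conjugation the companion polynomial $\overline{g(\bar z)}=P(z)+iQ(z)$ has all of its zeros in the closed upper half-plane. Since $\deg P=n$ strictly exceeds $\deg Q\le n-1$, the Hermite--Biehler theorem then immediately yields the conclusion: $P$ and $Q$ have only real zeros, and their zeros interlace.

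The main obstacle I anticipate is invoking the correct \emph{closed} half-plane form of Hermite--Biehler; the classical statement is usually phrased for the open upper half-plane, with strict interlacing. To handle the boundary case, which corresponds to purely imaginary roots of $f$ and therefore to common real zeros of $P$ and $Q$, I would perturb by $f_{\varepsilon}(x):=f(x-\varepsilon)$ with $\varepsilon>0$ small, so that the roots of $f_\varepsilon$ lie strictly inside the right half-plane. The open Hermite--Biehler theorem then supplies strictly interlacing real zeros for the associated $P_\varepsilon$ and $Q_\varepsilon$; letting $\varepsilon\to 0$ and appealing to continuous dependence of zeros on coefficients, strict interlacing degenerates to weak interlacing and the lemma follows. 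A self-contained alternative is an induction on $n$, factoring $f$ into its real linear and quadratic irreducible factors and verifying that multiplication of two polynomials of the form $P-iQ$ whose zeros lie in the closed lower half-plane produces another such polynomial; the book-keeping, however, essentially re-proves Hermite--Biehler.
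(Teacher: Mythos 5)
Your proposal is correct, but there is nothing in the paper to compare it against: the paper does not prove this lemma, it simply quotes it as Problem 743 of \cite{PHA} and uses it as a black box. Your reduction to Hermite--Biehler is the standard route and it works. The identity $i^{-n}f(ix)=P(x)-iQ(x)$ checks out from the cyclic values of $i^{-k}$; the zeros of $z\mapsto i^{-n}f(iz)$ are $-ir_j$ and lie in the closed lower half-plane precisely because $\Re r_j\ge 0$; and the forward implication of Hermite--Biehler (all zeros of $P+iQ$ in one open half-plane implies $P$ and $Q$ have real, simple, strictly interlacing zeros) is exactly the direction you need. You also correctly isolate the one delicate point -- the hypothesis only gives the \emph{closed} half-plane -- and your fix (perturb to $f(x-\varepsilon)$, apply the open form, let $\varepsilon\to 0$ and use continuity of zeros) is valid: $P_\varepsilon$ stays monic of degree $n$, and when $a_1\neq 0$ the leading coefficient of $Q_\varepsilon$ stays bounded away from zero, so all zeros converge and strict interlacing degenerates to weak interlacing in the limit. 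Two small caveats are worth recording. If $a_1=0$ then all roots of $f$ are purely imaginary, $Q$ may vanish identically or drop degree, and some zeros of $Q_\varepsilon$ escape to infinity, so the interlacing assertion must be read for the surviving zeros (the paper's application of the lemma treats the case $e_1(\mathcal{X})=0$ separately for exactly this reason). Also, the conclusion is insensitive to replacing ``nonnegative real parts'' by ``nonpositive real parts'', since passing from $f(x)$ to $(-1)^nf(-x)$ fixes $P$ and negates $Q$; this is relevant because in the paper's application the roots $-x_1,\ldots,-x_n$ of the polynomial actually have nonpositive real parts, so it is the latter form of the hypothesis that is really being invoked.
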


\begin{thm}\label{thm:AllEven/AllOdd}
	Let $\mathcal{X}:=(x_1,x_2,\ldots,x_n)$ be a self-conjugate list of complex numbers with nonnegative real parts. Then
	\begin{equation}\label{eq:EvenStatement}
		P_{2k}(\mathcal{X})P_{2l}(\mathcal{X})\geq P_{2k-2}(\mathcal{X})P_{2l+2}(\mathcal{X}) \: : \hspace{4mm} 1\leq k\leq l\leq \lfloor n/2 \rfloor-1
	\end{equation}
	and
	\begin{equation}\label{eq:OddStatement}
		P_{2k+1}(\mathcal{X})P_{2l+1}(\mathcal{X})\geq P_{2k-1}(\mathcal{X})P_{2l+3}(\mathcal{X}) \: : \hspace{4mm} 1\leq k\leq l\leq \lceil n/2 \rceil-2.
	\end{equation}
\end{thm}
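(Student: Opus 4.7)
The plan is to apply Lemma~\ref{lem:Even/Odd} to $f(x) := \prod_{i=1}^{n}(x-x_i)$, for which $a_k = (-1)^k e_k(\mathcal{X})$, and then reduce both inequalities to the classical generalised Newton inequalities (\ref{eq:GeneralisedNewton'sInequalities}) applied to nonnegative real numbers. Lemma~\ref{lem:Even/Odd} produces two real-rooted polynomials built from the even- and odd-indexed coefficients of $f$ respectively; each of them involves only powers of $x$ of a single parity, so its roots are symmetric about $0$. Substituting $u = x^2$ (after first pulling out a factor of $x$ when needed) turns each into a polynomial with \emph{nonnegative} real roots, bringing us into the setting where classical Newton applies.

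For (\ref{eq:EvenStatement}), I will work with
\[
p(x) := x^n - a_2 x^{n-2} + a_4 x^{n-4} - \cdots = \sum_{j\geq 0}(-1)^j e_{2j}(\mathcal{X})\, x^{n-2j}.
\]
The substitution $u = x^2$ converts $p$ into a polynomial of degree $\lfloor n/2 \rfloor$ whose roots $\alpha_1,\ldots,\alpha_{\lfloor n/2\rfloor}$ are nonnegative reals satisfying $e_k(\alpha_1,\ldots,\alpha_{\lfloor n/2\rfloor}) = e_{2k}(\mathcal{X})$ for every $k \in \{0,1,\ldots,\lfloor n/2\rfloor\}$. Since $E_k(\alpha) = \binom{\lfloor n/2\rfloor}{k}^{-1} e_k(\alpha) = P_{2k}(\mathcal{X})$, the generalised Newton inequality $E_k(\alpha) E_l(\alpha) \geq E_{k-1}(\alpha) E_{l+1}(\alpha)$ becomes exactly (\ref{eq:EvenStatement}).

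For (\ref{eq:OddStatement}), I will perform the analogous reduction on
\[
q(x) := a_1 x^{n-1} - a_3 x^{n-3} + \cdots = -\sum_{j\geq 0}(-1)^j e_{2j+1}(\mathcal{X})\, x^{n-1-2j}.
\]
Provided $e_1(\mathcal{X}) > 0$, this is a degree-$(n-1)$ real-rooted polynomial, and the $u=x^2$ reduction produces nonnegative reals $\beta_1,\ldots,\beta_{\lceil n/2\rceil - 1}$ with $e_k(\beta) = e_{2k+1}(\mathcal{X})/e_1(\mathcal{X})$ for each admissible $k$. Normalising, $E_k(\beta) = P_{2k+1}(\mathcal{X})/e_1(\mathcal{X})$, and applying (\ref{eq:GeneralisedNewton'sInequalities}) to $\beta$ then yields (\ref{eq:OddStatement}) after cancelling the common factor $e_1(\mathcal{X})^2$.

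The remaining subtlety, and the step I expect to be the main obstacle, is twofold: first, the degenerate case $e_1(\mathcal{X}) = 0$, which I will dispose of by observing that $e_1(\mathcal{X}) = \sum_i \mathrm{Re}(x_i)$ together with $\mathrm{Re}(x_i) \geq 0$ forces every $x_i$ to be purely imaginary, making all $e_{2j+1}(\mathcal{X})$ vanish and (\ref{eq:OddStatement}) trivial; and second, the bookkeeping needed to handle $n$ even and $n$ odd uniformly in the $u = x^2$ reduction, in particular verifying that the number of roots produced is $\lfloor n/2 \rfloor$ for $p$ and $\lceil n/2 \rceil - 1$ for $q$, matching exactly the denominators used to normalise $P_{2k}$ and $P_{2k+1}$.
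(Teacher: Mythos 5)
Your proposal is correct and follows essentially the same route as the paper's proof: apply Lemma~\ref{lem:Even/Odd}, substitute $u=x^2$ to obtain polynomials with nonnegative real roots whose elementary symmetric functions are $e_{2k}(\mathcal{X})$ (respectively $e_{2k+1}(\mathcal{X})/e_1(\mathcal{X})$), and invoke the generalised Newton inequalities (\ref{eq:GeneralisedNewton'sInequalities}); the degenerate case $e_1(\mathcal{X})=0$ is disposed of exactly as you indicate. The only (immaterial) difference is that you apply the lemma to $\prod(x-x_i)$ rather than to $\prod(x+x_i)$ as the paper does, and you make explicit the even/odd-$n$ degree bookkeeping that the paper dismisses with ``the proof for odd $n$ is similar.''
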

\begin{proof}
	First suppose $n$ is even and write $n=2m$. The polynomial
	\[
		x^{2m}+e_1(\mathcal{X})x^{2m-1}+e_2(\mathcal{X})x^{2m-2}+\cdots+e_{2m}(\mathcal{X})
	\]
	has roots $-x_1,-x_2,\ldots,-x_{2m}$. Therefore, by Lemma \ref{lem:Even/Odd}, the polynomial
	\[
		x^{2m}-e_2(\mathcal{X})x^{2m-2}+e_4(\mathcal{X})x^{2m-4}-\cdots+(-1)^{m}e_{2m}(\mathcal{X})
	\]
	has real roots. Hence the roots of the polynomial
	\[
		w^{m}-e_2(\mathcal{X})w^{m-1}+e_4(\mathcal{X})w^{m-2}-\cdots+(-1)^{m}e_{2m}(\mathcal{X}),
	\]
	say $w_1,w_2,\ldots,w_m$, are real and nonnegative. Setting $\mathcal{W}:=(w_1,w_2,\ldots,w_m)$, we note that $e_k(\mathcal{W})=e_{2k}(\mathcal{X})$: $k=0,1,\ldots,m$, and hence, applying Newton's inequalities (\ref{eq:GeneralisedNewton'sInequalities}) to $\mathcal{W}$ gives
	\[
		\frac{e_{2k}(\mathcal{X})}{\binom{m}{k}}\frac{e_{2l}(\mathcal{X})}{\binom{m}{l}}\geq\frac{e_{2k-2}(\mathcal{X})}{\binom{m}{k-1}}\frac{e_{2l+2}(\mathcal{X})}{\binom{m}{l+1}}
	\]
	or
	\[
		P_{2k}(\mathcal{X})P_{2l}(\mathcal{X})\geq P_{2k-2}(\mathcal{X})P_{2l+2}(\mathcal{X}) \: : \hspace{4mm} 1\leq k\leq l\leq m-1.
	\]
	
	Similarly, by Lemma \ref{lem:Even/Odd}, the polynomial
	\[
		e_1(\mathcal{X})x^{2m-1}-e_3(\mathcal{X})x^{2m-3}+e_5(\mathcal{X})x^{2m-5}-\cdots+(-1)^{m-1}e_{2m-1}(\mathcal{X})x
	\]
	has real roots. If $e_1(\mathcal{X})=0$, then $\mathrm{Re}(x_i)=0$ for all $i$. This would imply that $e_{2k+1}(\mathcal{X})=0$ for all $0\leq k\leq m-1$, in which case (\ref{eq:OddStatement}) holds trivially. If $e_1(\mathcal{X})>0$, it follows that the roots of the polynomial
	\[
		w^{m-1}-\frac{e_3(\mathcal{X})}{e_1(\mathcal{X})}w^{m-2}+\frac{e_5(\mathcal{X})}{e_1(\mathcal{X})}w^{m-3}-\cdots+(-1)^{m-1}\frac{e_{2m-1}(\mathcal{X})}{e_1(\mathcal{X})},
	\]
	say $w_1,w_2,\ldots,w_{m-1}$, are real and nonnegative. We note that
	\[
		e_k(\mathcal{W})=\frac{e_{2k+1}(\mathcal{X})}{e_1(\mathcal{X})} \hspace{2mm} : \hspace{6mm} k=0,1,\ldots,m-1,
	\]
	and hence, applying Newton's inequalities (\ref{eq:GeneralisedNewton'sInequalities}) to $\mathcal{W}$ gives
	\[
		\frac{e_{2k+1}(\mathcal{X})}{\binom{m-1}{k}}\frac{e_{2l+1}(\mathcal{X})}{\binom{m-1}{l}}\geq\frac{e_{2k-1}(\mathcal{X})}{\binom{m-1}{k-1}}\frac{e_{2l+3}(\mathcal{X})}{\binom{m-1}{l+1}}
	\]
	or
	\[
		P_{2k+1}(\mathcal{X})P_{2l+1}(\mathcal{X})\geq P_{2k-1}(\mathcal{X})P_{2l+3}(\mathcal{X}) \: : \hspace{4mm} 1\leq k\leq l\leq m-2.
	\]
	
	The proof for odd $n$ is similar.
\end{proof}

As with Newton's inequalities, we note that (\ref{eq:EvenStatement}) and (\ref{eq:OddStatement}) are stronger than $e_{2k}e_{2l}\geq e_{2k-2}e_{2l+2}$ and $e_{2k+1}e_{2l+1}\geq e_{2k-1}e_{2l+3}$, respectively.

If $k$ and $l$ have different parity and $h=1$, it turns out that the best-possible constant in (\ref{eq:GeneralFormInequalities}) is $C=1$:

\begin{thm}\label{thm:DifferentParity}
	Let $\mathcal{X}:=(x_1,x_2,\ldots,x_n)$ be a list of self-conjugate variables with nonnegative real parts. If $k$ and $l$ have different parity, $1\leq k<l\leq n-1$, then
	\begin{equation}\label{eq:EvenOddGap}
		e_k(\mathcal{X})e_l(\mathcal{X})\geq e_{k-1}(\mathcal{X})e_{l+1}(\mathcal{X}).
	\end{equation}
\end{thm}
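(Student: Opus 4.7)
The plan is to adapt the approach of Theorem \ref{thm:AllEven/AllOdd}. For $n=2m$ even (the odd case is analogous after adjoining a zero to $\mathcal{W}$), Lemma \ref{lem:Even/Odd} produces nonnegative real lists $\mathcal{W}=(w_1,\dots,w_m)$ and $\mathcal{V}=(v_1,\dots,v_{m-1})$ with $e_a(\mathcal{W})=e_{2a}(\mathcal{X})$, $e_1(\mathcal{X})\,e_a(\mathcal{V})=e_{2a+1}(\mathcal{X})$, and interlacing $w_t\le v_t\le w_{t+1}$. If $e_1(\mathcal{X})=0$, every $x_i$ is purely imaginary and every odd-indexed $e_i(\mathcal{X})$ vanishes, so (\ref{eq:EvenOddGap}) collapses to $0\ge 0$; otherwise I may cancel a common factor of $e_1(\mathcal{X})$ throughout. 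Writing $k=2i+1$, $l=2j$ with $i<j$, or $k=2i$, $l=2j+1$ with $i\le j$, the inequality (\ref{eq:EvenOddGap}) reduces respectively to
\begin{gather*}
	e_i(\mathcal{V})\,e_j(\mathcal{W})\ge e_i(\mathcal{W})\,e_j(\mathcal{V}),\\
	e_i(\mathcal{W})\,e_j(\mathcal{V})\ge e_{i-1}(\mathcal{V})\,e_{j+1}(\mathcal{W}).
\end{gather*}

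I would bridge $\mathcal{V}$ and $\mathcal{W}$ by Lagrange interpolation of $\prod_s(x-v_s)$ at the nodes $w_1,\dots,w_m$. The interlacing makes both $\prod_s(w_t-v_s)$ and $\prod_{r\ne t}(w_t-w_r)$ carry the common sign $(-1)^{m-t}$, so
\[
	e_k(\mathcal{V})=\sum_{t=1}^{m}c_t\,e_k(\mathcal{W}\setminus\{w_t\}),\qquad c_t\ge 0,\ \sum_t c_t=1.
\]
Substituting this into the difference $\mathrm{LHS}-\mathrm{RHS}$ of each reduced inequality and using $e_k(\mathcal{W})=e_k(\mathcal{W}\setminus\{w_t\})+w_t\,e_{k-1}(\mathcal{W}\setminus\{w_t\})$, the cross-terms cancel. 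The first case collapses to $\sum_t c_t\,w_t\,\bigl[e_i\,e_{j-1}-e_{i-1}\,e_j\bigr](\mathcal{W}\setminus\{w_t\})$ with $i\le j-1$; in the second case the $w_t$-terms cancel entirely, leaving $\sum_t c_t\,\bigl[e_i\,e_j-e_{i-1}\,e_{j+1}\bigr](\mathcal{W}\setminus\{w_t\})$ with $i\le j$. In either case the residual bracket is nonnegative by the generalised Newton inequality (\ref{eq:GeneralisedNewton'sInequalities}) applied to the nonnegative real list $\mathcal{W}\setminus\{w_t\}$ (the binomial-weighted form of (\ref{eq:GeneralisedNewton'sInequalities}) readily implies its unweighted counterpart $e_k\,e_l\ge e_{k-1}\,e_{l+1}$ for $k\le l$).

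The main obstacle is the sign analysis that yields nonnegativity of the Lagrange coefficients — this is exactly the role of the interlacing in Lemma \ref{lem:Even/Odd}, but must be spelled out (checking that the number of negative factors in both numerator and denominator equals $m-t$). Once that step is in place, the algebraic cancellations producing the residual Newton-type brackets are routine, as is the odd-$n$ boundary: the adjoined node $\tilde w_1 = 0$ contributes a zero factor in the first case and is harmless in the second.
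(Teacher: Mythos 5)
Your argument is essentially sound but takes a genuinely different route from the paper. The paper proves the theorem by induction on $n$, peeling off one real entry $\mu_s$ or one conjugate pair $a_m\pm ib_m$ at a time, and establishes the stronger statement that $e_k e_l-e_{k-1}e_{l+1}$ (for $k,l$ of different parity) and $e_k e_l-e_{k-2}e_{l+2}$ (same parity) are polynomials in the $a_i,b_i,\mu_i$ with all coefficients positive; that coefficient-positivity is not just a proof device, since it is quoted again in the proof of Theorem \ref{thm:e2k2} (positivity of the quantities $X$ and $Z$ there). Your route instead pushes Lemma \ref{lem:Even/Odd} further, as in Theorem \ref{thm:AllEven/AllOdd}: the reduction of (\ref{eq:EvenOddGap}) to the two ``mixed'' inequalities for the interlacing nonnegative lists $\mathcal{V},\mathcal{W}$ is correct, the identity $e_a(\mathcal{V})=\sum_t c_t\,e_a(\mathcal{W}\setminus\{w_t\})$ with $c_t\ge0$ and $\sum_t c_t=1$ is the standard Lagrange consequence of interlacing, and the two cancellations you describe do land on generalised Newton brackets for the nonnegative real list $\mathcal{W}\setminus\{w_t\}$, with the convention $e_r=0$ for $r$ out of range absorbing the boundary indices. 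Your handling of $e_1(\mathcal{X})=0$ and of odd $n$ (where the raw interlacing comes out as $v_t\le w_t\le v_{t+1}$ and is repaired by adjoining the node $0$ to $\mathcal{W}$) is also fine. What each approach buys: yours is shorter and reuses machinery already in place; the paper's delivers the term-by-term positivity needed downstream.

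The one genuine gap is that interpolation at the nodes $w_1,\dots,w_m$ requires them to be distinct, and nothing in the hypotheses prevents repeated entries in $\mathcal{W}$: for instance $\mathcal{X}=(1,1,i,-i)$ gives even part $(x^2-1)^2$, hence $\mathcal{W}=(1,1)$ with $e_1(\mathcal{X})=2>0$, so the degenerate case really occurs. You must either pass to Hermite interpolation or, more simply, note that the two reduced inequalities are closed conditions on the pair $(\mathcal{V},\mathcal{W})$ and that any weakly interlacing pair of nonnegative lists is a limit of strictly interlacing ones (replace $w_t$ by $w_t+t\epsilon$ and $v_t$ by $v_t+(t+\tfrac12)\epsilon$ and let $\epsilon\to0^{+}$). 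With that patch, and with the sign count for the $c_t$ written out as you indicate (exactly $m-t$ nonpositive factors in both numerator and denominator), the proof is complete.
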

\begin{proof}
	Let us write
	\[
		\mathcal{X}=\left( a_1\pm ib_1,a_2\pm ib_2,\ldots,a_m\pm ib_m,\mu_1,\mu_2,\ldots,\mu_s \right),
	\]
	where $n=2m+s$ and the $a_i$, $b_i$ and $\mu_i$ are nonnegative. Consider the functions
	\[
		f(a_1,\ldots,a_m,b_1,\ldots,b_m,\mu_1,\ldots,\mu_s)=e_k(\mathcal{X})e_l(\mathcal{X})-e_{k-1}(\mathcal{X})e_{l+1}(\mathcal{X})
	\]
	and
	\[
		g(a_1,\ldots,a_m,b_1,\ldots,b_m,\mu_1,\ldots,\mu_s)=e_k(\mathcal{X})e_l(\mathcal{X})-e_{k-2}(\mathcal{X})e_{l+2}(\mathcal{X})
	\]
	as multivariable polynomials in $a_1,\ldots,a_m,b_1,\ldots,b_m,\mu_1,\ldots,\mu_s$. We claim that
	\begin{enumerate}[(i)]
		\item for all $1\leq k<l\leq n-1$, where $k$ and $l$ have different parity, the coefficient of every term in $f$ is positive and
		\item for all $2\leq k\leq l\leq n-2$, where $k$ and $l$ have the same parity, the coefficient of every term in $g$ is positive.
	\end{enumerate}
	
	The proof is by induction on $n$. If $n=1$ or $n=2$, then there is nothing to prove. Now assume that (i) and (ii) hold for all lists of length strictly less than $n$. If $s>0$, we note that for $i=0,1,\ldots,n$,
	\[
		e_i(\mathcal{X})=e_i(\mathcal{X}')+\mu_se_{i-1}(\mathcal{X}'),
	\]
	where
	\[
		\mathcal{X}':=\left( a_1\pm ib_1,a_2\pm ib_2,\ldots,a_m\pm ib_m,\mu_1,\mu_2,\ldots,\mu_{s-1} \right).
	\]
	Therefore, we may write
	\[
		e_k(\mathcal{X})e_l(\mathcal{X})-e_{k-1}(\mathcal{X})e_{l+1}(\mathcal{X})=A \mu _s^2+B \mu _s+C,
	\]
	where
	\begin{align*}
		A&:=e_{k-1}(\mathcal{X}') e_{l-1}(\mathcal{X}')-e_{k-2}(\mathcal{X}') e_l(\mathcal{X}'), \\
		B&:=e_k(\mathcal{X}') e_{l-1}(\mathcal{X}')-e_{k-2}(\mathcal{X}') e_{l+1}(\mathcal{X}'), \\
		C&:=e_k(\mathcal{X}') e_l(\mathcal{X}')-e_{k-1}(\mathcal{X}') e_{l+1}(\mathcal{X}').
	\end{align*}
	If $k$ and $l$ have different parity, $1\leq k<l\leq n-1$, then the inductive hypothesis guarantees that $A$, $B$ and $C$ consist entirely of positive terms. Hence every term in $f$ is positive.
	
	Similarly, we may write
	\[
		e_k(\mathcal{X})e_l(\mathcal{X})-e_{k-2}(\mathcal{X})e_{l+2}(\mathcal{X})=A \mu _s^2+B \mu _s+C,
	\]
	where
	\begin{align*}
		A:=\;&e_{k-1}(\mathcal{X}') e_{l-1}(\mathcal{X}')-e_{k-3}(\mathcal{X}') e_{l+1}(\mathcal{X}'), \\
		B:=\;&e_{k-1}(\mathcal{X}') e_l(\mathcal{X}')-e_{k-3}(\mathcal{X}') e_{l+2}(\mathcal{X}')\\
		+\;&e_k(\mathcal{X}') e_{l-1}(\mathcal{X}')-e_{k-2}(\mathcal{X}') e_{l+1}(\mathcal{X}'), \\
		C:=\;&e_k(\mathcal{X}') e_l(\mathcal{X}')-e_{k-2}(\mathcal{X}') e_{l+2}(\mathcal{X}').
	\end{align*}
	If $k$ and $l$ have the same parity, $2\leq k\leq l\leq n-2$, then the inductive hypothesis again guarantees that $A$, $B$ and $C$ consist entirely of positive terms. Hence every term in $g$ is positive.
	
	On the other hand, if $s=0$, we note that for $i=0,1,\ldots,n$,
	\[
		e_i(\mathcal{X})=e_i(\mathcal{X}')+2a_m e_{i-1}(\mathcal{X}')+\left(a_m^2+b_m^2\right)e_{i-2}(\mathcal{X}'),
	\]
	where
	\[
		\mathcal{X}':=\left( a_1\pm ib_1,a_2\pm ib_2,\ldots,a_{m-1}\pm ib_{m-1} \right).
	\]
	Hence, we may write
	\begin{multline*}
		e_k(\mathcal{X})e_l(\mathcal{X})-e_{k-1}(\mathcal{X})e_{l+1}(\mathcal{X}) \\
		=A(a_m^2+b_m^2)^2+(B+2Xa_m)(a_m^2+b_m^2)+4Ya_m^2+2Za_m+C,
	\end{multline*}
	where
	\begin{align*}
		A:=\;&e_{k-2}(\mathcal{X}') e_{l-2}(\mathcal{X}')-e_{k-3}(\mathcal{X}') e_{l-1}(\mathcal{X}'), \\
		B:=\;&e_{k-2}(\mathcal{X}') e_l(\mathcal{X}')-e_{k-3}(\mathcal{X}') e_{l+1}(\mathcal{X}')\\
		+\;&e_k(\mathcal{X}') e_{l-2}(\mathcal{X}')-e_{k-1}(\mathcal{X}') e_{l-1}(\mathcal{X}'), \\
		C:=\;&e_k(\mathcal{X}') e_l(\mathcal{X}')-e_{k-1}(\mathcal{X}') e_{l+1}(\mathcal{X}'), \\
		X:=\;&e_{k-1}(\mathcal{X}') e_{l-2}(\mathcal{X}')-e_{k-3}(\mathcal{X}') e_l(\mathcal{X}'), \\
		Y:=\;&e_{k-1}(\mathcal{X}') e_{l-1}(\mathcal{X}')-e_{k-2}(\mathcal{X}') e_l(\mathcal{X}'), \\
		Z:=\;&e_k(\mathcal{X}') e_{l-1}(\mathcal{X}')-e_{k-2}(\mathcal{X}') e_{l+1}(\mathcal{X}').
	\end{align*}
	If $k$ and $l$ have different parity, $1\leq k<l\leq n-1$, then the inductive hypothesis guarantees that $A$, $B$, $C$, $X$, $Y$ and $Z$ consist entirely of positive terms. Hence every term in $f$ is positive, as before.
	
	Similarly, we may write
	\begin{multline*}
		e_k(\mathcal{X})e_l(\mathcal{X})-e_{k-2}(\mathcal{X})e_{l+2}(\mathcal{X}) \\
		=A(a_m^2+b_m^2)^2+(B+2Xa_m)(a_m^2+b_m^2)+4Ya_m^2+2Za_m+C,
	\end{multline*}
	where
	\begin{align*}
		A:=\;&e_{k-2}(\mathcal{X}') e_{l-2}(\mathcal{X}')-e_{k-4}(\mathcal{X}') e_l(\mathcal{X}'), \\
		B:=\;&e_k(\mathcal{X}') e_{l-2}(\mathcal{X}')-e_{k-4}(\mathcal{X}') e_{l+2}(\mathcal{X}'),\\
		C:=\;&e_k(\mathcal{X}') e_l(\mathcal{X}')-e_{k-2}(\mathcal{X}') e_{l+2}(\mathcal{X}'), \\
		X:=\;&e_{k-2}(\mathcal{X}') e_{l-1}(\mathcal{X}')-e_{k-4}(\mathcal{X}') e_{l+1}(\mathcal{X}') \\
		+\;&e_{k-1}(\mathcal{X}') e_{l-2}(\mathcal{X}')-e_{k-3}(\mathcal{X}') e_l(\mathcal{X}'), \\
		Y:=\;&e_{k-1}(\mathcal{X}') e_{l-1}(\mathcal{X}')-e_{k-3}(\mathcal{X}') e_{l+1}(\mathcal{X}'), \\
		Z:=\;&e_{k-1}(\mathcal{X}') e_l(\mathcal{X}')-e_{k-3}(\mathcal{X}') e_{l+2}(\mathcal{X}') \\
		+\;&e_k(\mathcal{X}') e_{l-1}(\mathcal{X}')-e_{k-2}(\mathcal{X}') e_{l+1}(\mathcal{X}').
	\end{align*}
	If $k$ and $l$ have the same parity, $2\leq k \leq l\leq n-2$, then the inductive hypothesis guarantees that $A$, $B$, $C$, $X$, $Y$ and $Z$ consist entirely of positive terms. Hence every term in $g$ is positive, as before.
\end{proof}

\begin{rem}
	In the proof of Theorem \ref{thm:DifferentParity}, we saw that if $k$ and $l$ have the same parity, then $e_k(\mathcal{X})e_l(\mathcal{X})\geq e_{k-2}(\mathcal{X})e_{l+2}(\mathcal{X})$ and the difference $e_k(\mathcal{X})e_l(\mathcal{X})-e_{k-2}(\mathcal{X})e_{l+2}(\mathcal{X})$ is a multivariable polynomial in $a_1,\ldots,a_m,b_1,\ldots,b_m,\mu_1,\ldots,\mu_s$ consisting entirely of positive terms. This inequality is weaker than the inequality $P_k(\mathcal{X})P_l(\mathcal{X})\geq P_{k-2}(\mathcal{X})P_{l+2}(\mathcal{X})$, obtained from Theorem \ref{thm:AllEven/AllOdd}, but the difference $P_k(\mathcal{X})P_l(\mathcal{X})-P_{k-2}(\mathcal{X})P_{l+2}(\mathcal{X})$ does not consist entirely of positive terms.
\end{rem}

It is clear that if $k$ and $l$ have different parity, then Theorem \ref{thm:DifferentParity} implies
\[
	e_k(\mathcal{X})e_l(\mathcal{X})\geq e_{k-h}(\mathcal{X})e_{l+h}(\mathcal{X}) \: : \hspace{4mm} h=2,3,\ldots;
\]
however, such inequalities may always be strengthened by combining Theorems \ref{thm:AllEven/AllOdd} and \ref{thm:DifferentParity}. For example, if $n$ is odd, then it is clear from the definition of $P_i$ that
\begin{equation}\label{eq:P=e}
	\frac{P_{2k-1}P_{2k+2}}{P_{2k-2}P_{2k+3}}=
	\frac{e_{2k-1}e_{2k+2}}{e_{2k-2}e_{2k+3}}
\end{equation}
and in this case,
\begin{align*}
	P_{2k}P_{2k+1} &\geq \sqrt{P_{2k-2}P_{2k-1}P_{2k+2}P_{2k+3}} \\
	&\geq P_{2k-2}P_{2k+3},
\end{align*}
where the first inequality follows from Theorem \ref{thm:AllEven/AllOdd} and the second follows from Theorem \ref{thm:DifferentParity} and (\ref{eq:P=e}). This is stronger than the inequality $e_{2k}e_{2k+1}\geq e_{2k-2}e_{2k+3}$, which would be obtained from Theorem \ref{thm:DifferentParity} alone.

We have yet to consider the case when $k$ and $l$ are both even in (\ref{eq:GeneralFormInequalities}), but $h$ is odd. Specifically, we ask if it is possible to derive inequalities of form
\[
	e_{2k}(\mathcal{X})e_{2l}(\mathcal{X})\geq C e_{2k-1}(\mathcal{X})e_{2l+1}(\mathcal{X}) \: : \hspace{4mm} 1\leq k\leq l\leq (n-1)/2,
\]
where $C>0$. It turns out that if we allow $\mathcal{X}$ to contain unpaired real numbers, then the answer is negative, as the following example illustrates:

\begin{ex}
	Consider the list
	\[
		\mathcal{X}:=( \underbrace{\epsilon i,-\epsilon i,\epsilon i,-\epsilon i,\ldots,\epsilon i,-\epsilon i}_{m\: \mathrm{pairs}}\,,1 )
	\]
	of length $n=2m+1$. We have
	\begin{gather*}
		e_1(\mathcal{X})=1,\\
		e_{2i}(\mathcal{X})=e_{2i+1}(\mathcal{X})=\binom{m}{i}\epsilon^i \: : \hspace{4mm} i=1,2,\ldots,m.
	\end{gather*}
	Hence, for all $1\leq k\leq l\leq m$,
	\[
		\frac{e_{2k}(\mathcal{X})e_{2l}(\mathcal{X})}{e_{2k-1}(\mathcal{X})e_{2l+1}(\mathcal{X})}=\left( \frac{m+1}{k}-1 \right)\epsilon.
	\]
	This example shows us that, given any $k$, $l$ and $n$, it is always possible to find a list $\mathcal{X}$ of length $n$, such that $e_{2k}(\mathcal{X})e_{2l}(\mathcal{X})$ is arbitrarily small compared to $e_{2k-1}(\mathcal{X})e_{2l+1}(\mathcal{X})$.
\end{ex}

Surprisingly, if we insist that $\mathcal{X}$ contain only complex-conjugate pairs (all real numbers in $\mathcal{X}$ appear with even multiplicity), it turns out that
\begin{equation}\label{eq:e2k2Preview}
	e_{2k}(\mathcal{X})^2\geq e_{2k-1}(\mathcal{X})e_{2k+1}(\mathcal{X}) \: : \hspace{4mm} k=1,2,\ldots,m-1.
\end{equation}
We note the similarity of (\ref{eq:e2k2Preview}) to Newton's inequalities (\ref{eq:Newton'sInequalities}). To prove (\ref{eq:e2k2Preview}), we first require a technical lemma:

\begin{lem}\label{lem:eExpansions}
	Let $\mathcal{X}:=(a_1\pm ib_1,a_2\pm ib_2,\ldots,a_m\pm ib_m)$, where $a_i,b_i\geq0:i=1,2,\ldots,m$. Let $U:=\{1,2,\ldots,m\}$ and for each $S\subseteq U$, let $\mathcal{W}_S:=\left(a_i^2+b_i^2:i\in S\right)$. Then for $0\leq k\leq m$,
	\begin{equation}\label{eq:e2kExpansion}
		e_{2k}(\mathcal{X})=\sum_{r=0}^k2^{2r}\sum_{\scriptsize\begin{array}{c}S\subseteq U\\ |S|=2r\end{array}}\left(\prod_{i\in S}a_i\right)e_{k-r}(\mathcal{W}_{U\setminus S})
	\end{equation}
	and for $1\leq k\leq m$,
	\begin{equation}\label{eq:e2k-1Expansion}
		e_{2k-1}(\mathcal{X})=\sum_{r=0}^{k-1}2^{2r+1}\sum_{\scriptsize\begin{array}{c}S\subseteq U\\ |S|=2r+1\end{array}}\left(\prod_{i\in S}a_i\right)e_{k-r-1}(\mathcal{W}_{U\setminus S}).
	\end{equation}
\end{lem}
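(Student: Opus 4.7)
My plan is to read off both identities from the standard generating function
\[
	\sum_{j=0}^{2m} e_j(\mathcal{X}) t^j \;=\; \prod_{i=1}^m \bigl(1+(a_i+ib_i)t\bigr)\bigl(1+(a_i-ib_i)t\bigr) \;=\; \prod_{i=1}^m \bigl(1 + 2a_i t + (a_i^2+b_i^2)t^2\bigr),
\]
where the second equality uses the fact that $\mathcal{X}$ is made up of the $m$ conjugate pairs $a_i \pm ib_i$.

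When the rightmost product is expanded, each index $i \in U$ selects exactly one of the three monomials $1$, $2a_i t$, or $(a_i^2+b_i^2)t^2$. I will parametrise such a selection by the pair $(S,T)$ of disjoint subsets of $U$, where $S$ consists of those $i$ contributing $2a_i t$ and $T$ of those contributing $(a_i^2+b_i^2)t^2$. The resulting summand is
\[
	2^{|S|}\Bigl(\prod_{i\in S} a_i\Bigr)\Bigl(\prod_{i \in T}(a_i^2+b_i^2)\Bigr) t^{|S|+2|T|}.
\]
Fixing $S$ and summing over all $T \subseteq U\setminus S$ with a prescribed size $p$, the identity
\[
	\sum_{\substack{T\subseteq U\setminus S\\|T|=p}} \prod_{i \in T}(a_i^2+b_i^2) \;=\; e_p(\mathcal{W}_{U \setminus S})
\]
collapses the inner sum into an elementary symmetric function of $\mathcal{W}_{U\setminus S}$. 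Comparing coefficients of $t^j$ therefore yields
\[
	e_j(\mathcal{X}) \;=\; \sum_{\substack{s\geq 0\\ s\equiv j \!\!\!\pmod 2}} 2^s \sum_{\substack{S\subseteq U\\ |S|=s}}\Bigl(\prod_{i\in S} a_i\Bigr) e_{(j-s)/2}(\mathcal{W}_{U \setminus S}).
\]

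Setting $j=2k$ and reindexing $s=2r$ gives (\ref{eq:e2kExpansion}); setting $j=2k-1$ and reindexing $s=2r+1$ gives (\ref{eq:e2k-1Expansion}). The whole proof is essentially bookkeeping once the factored generating function is in hand, so I do not anticipate any genuine obstacle; the only points requiring care are the parity match between $s$ and $j$ (which is what forces the jumps of $2$ in the exponent of $2$), and verifying that the summation ranges $0 \le r \le k$ and $0 \le r \le k-1$ agree with the natural constraints $|S| \le m$ and $(j-|S|)/2 \ge 0$ — both of which are automatic since $e_p(\mathcal{W}_{U\setminus S})$ vanishes whenever $p$ exceeds $m-|S|$.
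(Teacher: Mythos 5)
Your argument is correct. Where the paper proves the lemma by induction on $m$ — using the recursion $e_i(\mathcal{X})=e_i(\mathcal{X}')+2a_me_{i-1}(\mathcal{X}')+(a_m^2+b_m^2)e_{i-2}(\mathcal{X}')$ and then reorganising the resulting triple sum so that the subsets of $U'$ recombine into subsets of $U$ — you instead read both identities off directly from the factored generating function $\prod_{i=1}^m\bigl(1+2a_it+(a_i^2+b_i^2)t^2\bigr)$, parametrising the expansion by the disjoint pair $(S,T)$ and comparing coefficients of $t^j$. The two proofs are close relatives (the paper's inductive step is precisely the effect of multiplying in one more quadratic factor), but your version is non-inductive and arguably more transparent: the parity constraint $|S|\equiv j\pmod 2$ explains at a glance why only even-sized $S$ appear in (\ref{eq:e2kExpansion}) and only odd-sized $S$ in (\ref{eq:e2k-1Expansion}), and it dispenses with the bookkeeping of splitting sums over $U'$ versus $U$. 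What the paper's induction buys in exchange is that it sets up the same expansion template, in terms of $\mathcal{X}'$, that is reused heavily in the proof of Theorem \ref{thm:e2k2}. Your remarks on the summation ranges are also right: terms with $|S|>m$ have an empty inner sum, and terms with $k-r>m-|S|$ vanish by the convention $e_p=0$ for $p$ out of range, so the stated limits $0\le r\le k$ and $0\le r\le k-1$ are consistent.
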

\begin{proof}
	The proof is by induction on $m$. If $m=1$, then (\ref{eq:e2kExpansion}) and (\ref{eq:e2k-1Expansion}) give $e_0(\mathcal{X})= 1$, $e_1(\mathcal{X})=2a_1$ and $e_2(\mathcal{X})=a_1^2+b_1^2$, as required. Now assume the statement holds for lists with $m-1$ complex-conjugate pairs.
	
	We note that for $i=0,1,\ldots,2m$,
	\begin{equation}\label{eq:eExpansions1}
		e_i(\mathcal{X})=e_i(\mathcal{X}')+2a_me_{i-1}(\mathcal{X}')+(a_m^2+b_m^2)e_{i-2}(\mathcal{X}'),
	\end{equation}
	where $\mathcal{X}':=(a_1\pm ib_1,a_2\pm ib_2,\ldots,a_{m-1}\pm ib_{m-1})$. Hence, by (\ref{eq:eExpansions1}) and the inductive hypothesis,
	\begin{align}
		e_{2k}(\mathcal{X}) =\; &\sum_{r=0}^k2^{2r}\sum_{\scriptsize\begin{array}{c}S\subseteq U'\\ |S|=2r\end{array}}\left(\prod_{i\in S}a_i\right)e_{k-r}(\mathcal{W}_{U'\setminus S})\notag \\
		+\; &a_m\sum_{r=1}^k2^{2r}\sum_{\scriptsize\begin{array}{c}S\subseteq U'\\ |S|=2r-1\end{array}}\left(\prod_{i\in S}a_i\right)e_{k-r}(\mathcal{W}_{U'\setminus S})\label{eq:eExpansions2} \\
		+\; &(a_m^2+b_m^2)\sum_{r=0}^{k-1}2^{2r}\sum_{\scriptsize\begin{array}{c}S\subseteq U'\\ |S|=2r\end{array}}\left(\prod_{i\in S}a_i\right)e_{k-r-1}(\mathcal{W}_{U'\setminus S}),\notag
	\end{align}
	where $U':=\{1,2,\ldots,m-1\}$; however, since
	\begin{align*}
		\sum_{\scriptsize\begin{array}{c}S\subseteq U\\ |S|=2r\end{array}}\left(\prod_{i\in S}a_i\right)e_{k-r}(\mathcal{W}_{U\setminus S}) =\; &\sum_{\scriptsize\begin{array}{c}S\subseteq U'\\ |S|=2r\end{array}}\left(\prod_{i\in S}a_i\right)e_{k-r}\left(\mathcal{W}_{(U'\setminus S)\cup\{m\}}\right) \\
		+\; &a_m\sum_{\scriptsize\begin{array}{c}S\subseteq U'\\ |S|=2r-1\end{array}}\left(\prod_{i\in S}a_i\right)e_{k-r}(\mathcal{W}_{U'\setminus S})
	\end{align*}
	and
	\[
		e_{k-r}\left(\mathcal{W}_{(U'\setminus S)\cup\{m\}}\right)=e_{k-r}\left(\mathcal{W}_{U'\setminus S}\right)+(a_m^2+b_m^2)e_{k-r-1}\left(\mathcal{W}_{U'\setminus S}\right),
	\]
	it follows that the right hand side of (\ref{eq:eExpansions2}) equals
	\[
		\sum_{r=0}^k2^{2r}\sum_{\scriptsize\begin{array}{c}S\subseteq U\\ |S|=2r\end{array}}\left(\prod_{i\in S}a_i\right)e_{k-r}(\mathcal{W}_{U\setminus S}).
	\]
	This establishes (\ref{eq:e2kExpansion}).
	
	The proof of (\ref{eq:e2k-1Expansion}) is similar.
\end{proof}

\begin{thm}\label{thm:e2k2}
	Let $\mathcal{X}:=(a_1\pm ib_1,a_2\pm ib_2,\ldots,a_m\pm ib_m)$, where $a_i,b_i\geq0:i=1,2,\ldots,m$. Then
	\begin{equation}\label{eq:e2k2}
		e_{2k}(\mathcal{X})^2\geq e_{2k-1}(\mathcal{X})e_{2k+1}(\mathcal{X}) \: : \hspace{4mm} k=1,2,\ldots,m-1.
	\end{equation}
\end{thm}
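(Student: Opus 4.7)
The plan is to apply Lemma~\ref{lem:eExpansions} directly to write each of $e_{2k}(\mathcal X)$, $e_{2k-1}(\mathcal X)$, and $e_{2k+1}(\mathcal X)$ as explicit sums indexed by subsets $S\subseteq U:=\{1,\ldots,m\}$ of a prescribed cardinality parity: even for $e_{2k}$, odd for $e_{2k\pm1}$. Writing $p_i:=a_i^2+b_i^2$ and $a_S:=\prod_{i\in S}a_i$, each of the three quantities becomes a sum of contributions $2^{|S|}\,a_S\,e_{\bullet}(\mathcal W_{U\setminus S})$, where the subscript $\bullet$ is pinned down by the relation $|S|+2\bullet=(\text{target index})$.

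Substituting into $D:=e_{2k}(\mathcal X)^2 - e_{2k-1}(\mathcal X)e_{2k+1}(\mathcal X)$ produces a double sum over pairs $(S_1,S_2)$ of subsets of $U$; the first double sum ranges over pairs with both $|S_i|$ even, and the second over pairs with both $|S_i|$ odd. To extract cancellation, I would reorganise by the intersection $A:=S_1\cap S_2$ and the union-complement $C:=U\setminus(S_1\cup S_2)$, so that $B:=S_1\triangle S_2=U\setminus(A\cup C)$. Then $a_{S_1}a_{S_2}=a_A^2\,a_B$, which is independent of how $B$ splits between $S_1$ and $S_2$. Within each $(A,C)$-block the outer factor $2^{2|A|+|B|}\,a_A^2\,a_B$ pulls out, and the remaining inner sum ranges over partitions $B=B_1\sqcup B_2$, with $|B_1|$ of a specified parity (determined by $|A|$ and by which side of the inequality one is on).

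The key step is to show that the inner difference (LHS minus RHS contribution) within each $(A,C)$-block is nonnegative. After extracting the common factor, this inner difference is a combination of products $e_{r_1}(p_{C\cup B_2})\,e_{r_2}(p_{C\cup B_1})$ on the nonnegative real list $(p_i)_{i\in B\cup C}$, with the even-parity piece subtracted by the odd-parity piece. The parity mismatch makes this difference rearrange exactly into a classical Newton-type expression $e_j e_{j'}-e_{j-1}e_{j'+1}$ in the $p_i$'s, which is nonnegative by the generalised Newton's inequality (\ref{eq:GeneralisedNewton'sInequalities}) applied to the real, nonnegative sequence $(p_i)_{i\in B\cup C}$. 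Summing the nonneg contributions over all $(A,C)$ gives $D\geq 0$.

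The main obstacle will be the combinatorial bookkeeping in the reorganisation: one must keep careful track of the parity of $|B_1|$ in each of the four sub-cases (depending on the parities of $|A|$ and of $|S_i|$), and verify that the resulting rearrangement of the inner difference is indeed a single classical Newton's-inequality difference for the $p_i$'s (rather than an iterated or non-standard form). A secondary subtlety is that the generalised Newton's inequality (\ref{eq:GeneralisedNewton'sInequalities}) requires a ``no internal zeros'' condition; this must be checked within each block, though it follows from the fact that $e_j(p_T)\neq 0$ whenever $j$ does not exceed the number of nonzero entries of $p_T$, which is compatible with the index ranges arising in the expansion.
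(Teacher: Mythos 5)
Your reduction to Newton's inequalities for the nonnegative reals $p_i:=a_i^2+b_i^2$ breaks down at its key step: it is \emph{not} true that the inner difference within each $(A,C)$-block is nonnegative. The cleanest failure comes from the ``diagonal'' blocks with $B=\emptyset$ and $|A|$ odd. Indeed, if $S_1=S_2=S$ with $|S|$ odd, the pair $(S_1,S_2)$ occurs in the expansion of $e_{2k-1}(\mathcal X)e_{2k+1}(\mathcal X)$ and contributes
\[
2^{2|S|}\Bigl(\prod_{i\in S}a_i\Bigr)^{2}\,e_{k-\frac{|S|+1}{2}}(\mathcal W_{U\setminus S})\,e_{k-\frac{|S|-1}{2}}(\mathcal W_{U\setminus S})\;\geq\;0,
\]
whereas the block $(A,C)=(S,U\setminus S)$ receives no contribution at all from $e_{2k}(\mathcal X)^2$, since there any admissible $S_1$ must have $|S_1|=|A|+|B_1|=|A|$ even. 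So the block difference is the \emph{negative} of a nonnegative quantity. Concretely, for $m=2$, $k=1$: $e_2^2-e_1e_3=(p_1+p_2)^2+4a_1a_2(p_1+p_2)+16a_1^2a_2^2-4a_1^2p_2-4a_2^2p_1$, and the blocks $A=\{1\},C=\{2\}$ and $A=\{2\},C=\{1\}$ contribute $-4a_1^2p_2$ and $-4a_2^2p_1$ respectively. No rearrangement into an expression of the form $e_je_{j'}-e_{j-1}e_{j'+1}$ on a single list can rescue a block whose even-parity piece is empty. (A separate, smaller issue: the two factors in your inner products live on \emph{different} sublists $C\cup B_1$ and $C\cup B_2$, so even in the favourable blocks the identification with a classical Newton difference on the single list $(p_i)_{i\in B\cup C}$ needs an identity you have not supplied.)

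The obstruction is structural, not just an accounting slip: $e_{2k}(\mathcal X)^2-e_{2k-1}(\mathcal X)e_{2k+1}(\mathcal X)$ genuinely has negative monomial coefficients as a polynomial in the $a_i,b_i$ (already for $m=2$, $k=1$ the coefficient of $a_1^2b_2^2$ is $-2$), and the inequality has nontrivial equality cases (Example 3.2 with $t=\sqrt{m/k-1}$), so the difference cannot be split into locally nonnegative pieces in the way you propose; the cancellation necessarily crosses blocks. The paper's proof handles this by subtracting an explicit nonnegative remainder $\Theta$ built from squares $e_{k-r}(\mathcal V_{U\setminus S})^2$ of elementary symmetric functions of the possibly negative quantities $a_i^2-b_i^2$, and then showing by induction on $m$ that $e_{2k}^2-e_{2k-1}e_{2k+1}-\Theta$ has all coefficients nonnegative --- with the borderline coefficients (those of $\prod a_i^2b_i^2$ and of $\bigl(\prod_{i\le p}a_i^2\bigr)\bigl(\prod_{i>p}b_i^2\bigr)$ when $m=2k$) computed exactly via binomial identities and shown to vanish or be nonnegative. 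If you want to salvage your approach, you would need to group the negative diagonal blocks with compensating positive blocks (necessarily involving different monomials in the $a_i$'s and $b_i$'s), which is essentially what the introduction of $\Theta$ accomplishes.
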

\begin{proof}
	Let $U:=\{1,2,\ldots,m\}$ and for each $S\subseteq U$, let $\mathcal{V}_S:=\left(a_i^2-b_i^2:i\in S\right)$. We will show that
	\[
		e_{2k}(\mathcal{X})^2-e_{2k-1}(\mathcal{X})e_{2k+1}(\mathcal{X})\geq\Theta,
	\]
	where
	\[
		\Theta:=\sum_{r=0}^{k-1}2^{2r}\sum_{\scriptsize\begin{array}{c}S\subseteq U\\ |S|=r\end{array}}\left(\prod_{i\in S}a_i^2b_i^2\right)e_{k-r}(\mathcal{V}_{U\setminus S})^2.
	\]
	More specifically, consider the function
	\[
		f(a_1,\ldots,a_m,b_1,\ldots,b_m)=e_{2k}(\mathcal{X})^2-e_{2k-1}(\mathcal{X})e_{2k+1}(\mathcal{X})-\Theta
	\]
	as a multivariable polynomial in $a_1,\ldots,a_m,b_1,\ldots,b_m$. We will prove:
	
	\underline{\textsc{Claim 1}:} The coefficient of every term in $f$ is positive.
	
	Ultimately, the proof of Claim 1 will be by induction on $m$; however, before we begin, there is a term in $f$ whose coefficient we must explicitly compute. Consider
	\[
		T:=\prod_{i=1}^ka_i^2b_i^2.
	\]
	
	\underline{\textsc{Claim 2}:} The coefficient of $T$ in $f$ is $2^{2k}$.
	
	In order to prove Claim 2, we will determine the coefficients of $T$ in $e_{2k}^2(\mathcal{X})$, $e_{2k-1}(\mathcal{X})e_{2k+1}(\mathcal{X})$ and $\Theta$ separately. First, recall that, by Lemma \ref{lem:eExpansions}, $e_{2k}(\mathcal{X})$ may be written in the form (\ref{eq:e2kExpansion}). The coefficient of $T$ in $e_{2k}(\mathcal{X})^2$ is calculated by considering the sum $\sum T_1T_2$, where the sum is over all appropriately chosen terms $T_1$ and $T_2$ in (\ref{eq:e2kExpansion}). Suppose $T_1$ and $T_2$ correspond to choices $S=S_1$ and $S=S_2$ in (\ref{eq:e2kExpansion}), respectively. It is clear that since each $a_i$ in $T$ has exponent 2, the only contributions to the coefficient of $T$ in $e_{2k}(\mathcal{X})$ come from choosing $S_1=S_2\subseteq\{1,2,\ldots,k\}$. In fact, we must choose $S_1=S_2=\emptyset$, since $i\in S$ implies $e_{k-r}(\mathcal{W}_{U\setminus S})$ is independent of $b_i$. Hence, the only contributions to the coefficient of $T$ come from setting $r=0$ in (\ref{eq:e2kExpansion}), i.e. the coefficient of $T$ in $e_{2k}(\mathcal{X})^2$ is precisely the coefficient of $T$ in $e_k(\mathcal{W}_U)^2$. This is the same as the coefficient of $T$ in $\prod_{i=1}^k\left(a_i^2+b_i^2\right)^2$, which equals $2^k$.
	
	Similarly, we note that $e_{2k-1}(\mathcal{X})$ my be written in the form (\ref{eq:e2k-1Expansion}) and that
	\begin{equation}\label{eq:e2k+1Expansion}
		e_{2k+1}(\mathcal{X}) = \sum_{r=0}^k2^{2r+1}\sum_{\scriptsize\begin{array}{c}S\subseteq U\\ |S|=2r+1\end{array}}\left(\prod_{i\in S}a_i\right)e_{k-r}(\mathcal{W}_{U\setminus S}).
	\end{equation}
	Since it is not possible to choose $S=\emptyset$ in (\ref{eq:e2k-1Expansion}) or (\ref{eq:e2k+1Expansion}), we conclude that the coefficient of $T$ in $e_{2k-1}(\mathcal{X})e_{2k+1}(\mathcal{X})$ is zero.
	
	To compute the coefficient of $T$ in $\Theta$, we note that for any set of integers $i_1,i_2,\ldots,i_{k-r}$ satisfying $1\leq i_1<i_2<\cdots<i_{k-r}\leq k$ and $i_1,i_2,\ldots,i_{k-r}\not\in S$, the coefficient of
	$
		\prod_{j=1}^{k-r}a_{i_j}^2b_{i_j}^2
	$
	in $e_{k-r}(\mathcal{V}_{U\setminus S})^2$ is simply its coefficient in
	\[
		\prod_{j=1}^{k-r}\left( a_{i_j}^2-b_{i_j}^2 \right)^2,
	\]
	which equals $(-2)^{k-r}$. Hence, the coefficient of $T$ in $\Theta$ is
	\[
		(-2)^k\sum_{r=0}^{k-1}(-2)^r\binom{k}{r}=2^k(1-2^k).
	\]
	This establishes Claim 2.
	
	We are now ready to prove Claim 1 by induction. If $m=2$, we need only check the claim holds for $k=1$. Setting $\mathcal{X}=(a_1\pm ib_1,a_2\pm ib_2)$,
	\begin{multline*}
		e_2(\mathcal{X})^2-e_1(\mathcal{X})e_3(\mathcal{X})-e_1\left(a_1^2-b_1^2,a_2^2-b_2^2\right)^2 =\\
		4 a_1^3 a_2+8 a_1^2 a_2^2+4 a_1 a_2^3+4 a_1^2 b_1^2+4 a_1 a_2 b_1^2+4 a_1 a_2 b_2^2+4 a_2^2 b_2^2.
	\end{multline*}
	
	Now assume the claim holds for all lists with $m-1$ complex-conjugate pairs and all $1\leq k\leq m-2$. Note that for $i=0,1,\ldots,2m$,
	\[
		e_i(\mathcal{X})=e_i(\mathcal{X}')+2a_me_{i-1}(\mathcal{X}')+(a_m^2+b_m^2)e_{i-2}(\mathcal{X}'),
	\]
	where $\mathcal{X}':=(a_1\pm ib_1,a_2\pm ib_2,\ldots,a_{m-1}\pm ib_{m-1})$. Hence, for $k=1,2,\ldots,m-1$,
	\begin{multline}\label{eq:eExpansion}
		e_{2k}(\mathcal{X})^2-e_{2k-1}(\mathcal{X})e_{2k+1}(\mathcal{X})= \\
		A\left(a_m^2+b_m^2\right)^2+\left(B+2X a_m\right)\left(a_m^2+b_m^2\right)+4Y a_m^2+2Z a_m+C,
	\end{multline}
	where
	\begin{align*}
		A&:=e_{2k-2}(\mathcal{X}')^2-e_{2k-3}(\mathcal{X}') e_{2k-1}(\mathcal{X}'), \\
		B&:=-e_{2k-1}(\mathcal{X}')^2+2e_{2k-2}(\mathcal{X}')e_{2k}(\mathcal{X}')-e_{2k-3}(\mathcal{X}') e_{2k+1}(\mathcal{X}'), \\
		C&:=e_{2k}(\mathcal{X}')^2-e_{2k-1}(\mathcal{X}') e_{2k+1}(\mathcal{X}'), \\
		X&:=e_{2k-2}(\mathcal{X}') e_{2k-1}(\mathcal{X}')-e_{2k-3}(\mathcal{X}')e_{2k}(\mathcal{X}'), \\
		Y&:=e_{2k-1}(\mathcal{X}')^2-e_{2k-2}(\mathcal{X}')e_{2k}(\mathcal{X}'), \\
		Z&:=e_{2k-1}(\mathcal{X}')e_{2k}(\mathcal{X}')-e_{2k-2}(\mathcal{X}') e_{2k+1}(\mathcal{X}').
	\end{align*}
	Similarly, we may expand $\Theta$ in terms of $a_m$ and $b_m$:
	\begin{equation}\label{eq:ThetaExpansion}
		\Theta=\alpha \left(a_m^4+b_m^4\right)+2\beta  a_m^2b_m^2+2\gamma \left(a_m^2-b_m^2\right)+\delta,
	\end{equation}
	where
	\begin{align*}
		U' :=& \{1,2,\ldots,m-1\}, \\
		\alpha :=& \sum_{r=0}^{k-1}2^{2r}\sum_{\scriptsize\begin{array}{c}S\subseteq U'\\ |S|=r\end{array}}\left(\prod_{i\in S}a_i^2b_i^2\right)e_{k-r-1}(\mathcal{V}_{U'\setminus S})^2, \\
		\beta :=& \sum_{r=0}^{k-2}2^{2r}\sum_{\scriptsize\begin{array}{c}S\subseteq U'\\ |S|=r\end{array}}\left(\prod_{i\in S}a_i^2b_i^2\right)e_{k-r-1}(\mathcal{V}_{U'\setminus S})^2 \\
		&-2^{2(k-1)}\sum_{\scriptsize\begin{array}{c}S\subseteq U'\\ |S|={k-1}\end{array}}\prod_{i\in S}a_i^2b_i^2, \\
		\gamma :=& \sum_{r=0}^{k-1}2^{2r}\sum_{\scriptsize\begin{array}{c}S\subseteq U'\\ |S|=r\end{array}}\left(\prod_{i\in S}a_i^2b_i^2\right)e_{k-r}(\mathcal{V}_{U'\setminus S})e_{k-r-1}(\mathcal{V}_{U'\setminus S}), \\
		\delta :=& \sum_{r=0}^{k-1}2^{2r}\sum_{\scriptsize\begin{array}{c}S\subseteq U'\\ |S|=r\end{array}}\left(\prod_{i\in S}a_i^2b_i^2\right)e_{k-r}(\mathcal{V}_{U'\setminus S})^2.
	\end{align*}
	
	Let us first consider the terms in $f$ which are independent of $a_m$ and $b_m$. By (\ref{eq:eExpansion}) and (\ref{eq:ThetaExpansion}), the sum of all such terms is given by $C-\delta$. Hence, the inductive hypothesis guarantees that every such term is positive.
	
	Next, let us consider the terms in $f$ which depend on either $a_m^4$ or $b_m^4$. The sum of all terms in $f$ which depend on $a_m^4$ is given by $(A-\alpha)a_m^4$ and the sum of all terms in $f$ which depend on $b_m^4$ is given by $(A-\alpha)b_m^4$. Observe that $\alpha$ may be written as
	\begin{align*}
		\alpha=&\sum_{r=0}^{k-2}2^{2r}\sum_{\scriptsize\begin{array}{c}S\subseteq U'\\ |S|=r\end{array}}\left(\prod_{i\in S}a_i^2b_i^2\right)e_{k-r-1}(\mathcal{V}_{U'\setminus S})^2 \\
		&+2^{2(k-1)}\sum_{\scriptsize\begin{array}{c}S\subseteq U'\\ |S|=k-1\end{array}}\prod_{i\in S}a_i^2b_i^2.
	\end{align*}
	Hence, by Claim 2, for any $S\subseteq U'$ with $|S|=k-1$, the term
	$
		\prod_{i\in S}a_i^2b_i^2 \hspace{6mm}
	$
	in $(A-\alpha)$ vanishes. The inductive hypothesis guarantees that the coefficients of all other terms in $(A-\alpha)$ are positive. Hence, every term in $f$ which depends on $a_m^4$ or $b_m^4$ is positive.
	
	Similarly, the sum of all terms in $f$ which depend on $a_m^2b_m^2$ is given by $2(A-\beta)$, but since
	\[
		A-\beta=A-\alpha+2^{2k-1}\sum_{\scriptsize\begin{array}{c}S\subseteq U'\\ |S|={k-1}\end{array}}\prod_{i\in S}a_i^2b_i^2,
	\]
	we see that the coefficient of every such term in $f$ is positive.
	
	Now consider those terms in $f$ in which the exponent of $a_m$ is 1 or 3. The sum of all such terms is given by $2Xa_m(a_m^2+b_m^2)+2Za_m$. It follows from the proof of Theorem \ref{thm:DifferentParity} that every term in $X$ and $Z$ is positive. Hence every term in $f$ in which the exponent of $a_m$ is 1 or 3 is positive.
	
	By symmetry, we have shown that a given term in $f$ is positive if any of the following conditions are satisfied for any $i=1,2,\ldots,m$:
	\begin{enumerate}[(i)]
		\item it is independent of $a_i$ and $b_i$;
		\item it is of the form $Da_i^4$ or $Db_i^4$, where $D$ is independent of $a_i$ and $b_i$;
		\item it is of the form $Da_i^2b_i^2$, where $D$ is independent of $a_i$ and $b_i$;
		\item it is a term in which the exponent of $a_i$ is 1 or 3.
	\end{enumerate}
	From the expansions given in (\ref{eq:eExpansion}) and (\ref{eq:ThetaExpansion}), we see that a general term in $f$ has the form
	\begin{equation}\label{eq:GeneralfTerm}
		a_1^{\zeta_1}a_2^{\zeta_2}\cdots a_m^{\zeta_m}b_1^{\eta_1}b_2^{\eta_2}\cdots b_m^{\eta_m},
	\end{equation}
	where for each $i=1,2,\ldots,m$, $\zeta_i\in\{0,1,2,3,4\}$, $\eta_i\in\{0,2,4\}$ and $\zeta_i+\eta_i\leq4$. If (\ref{eq:GeneralfTerm}) does not satisfy any of the conditions (i)--(iv) above for any $i$, then
	\[
		(\zeta_i,\eta_i)= (0,2) \hspace{2mm} \text{or} \hspace{2mm} (2,0) \hspace{2mm} : \hspace{6mm} i=1,2,\ldots,m.
	\]
	In particular, the degree of such a term is equal to $2m$; however, since every term in $f$ has degree $4k$, we conclude the following: if $m\neq 2k$, then every term in $f$ must satisfy one of the above conditions for some $i$ and if $m=2k$, then any term which does not satisfy any of the above conditions for any $i$, can, up to relabelling the $(a_i,b_i)$, be written in the form
	\[
		T^*:=\left(\prod _{i=1}^p a_i^2\right)\left(\prod _{i=p+1}^{2k} b_i^2\right)
	\]
	for some $p=0,1,\ldots,2k$. Therefore, it suffices to show:
	
	\underline{\textsc{Claim 3}:} If $m=2k$, the coefficient of $T^*$ in $f$ is nonnegative.
	
	In order to prove Claim 3, we will compute the coefficients of $T^*$ in $e_{2k}^2(\mathcal{X})$, $e_{2k-1}(\mathcal{X})e_{2k+1}(\mathcal{X})$ and $\Theta$ separately. Our logic will be similar to that used in the proof of Claim 2.
	
	Using (\ref{eq:e2kExpansion}) and the fact that the exponent of each $a_i$ in $T^*$ is 2, we conclude that the coefficient of $T^*$ in $e_{2k}(\mathcal{X})^2$ is the same as its coefficient in
	\[
		\sum_{r=0}^{\lfloor p/2 \rfloor}2^{4r}\sum_{\scriptsize\begin{array}{c}S\subseteq \{1,\ldots,p\}\\ |S|=2r\end{array}}\left(\prod_{i\in S}a_i^2\right)e_{k-r}\left(\mathcal{W}_{\{1,\ldots,2k\}\setminus S}\right)^2.
	\]
	In addition, for any $S\subseteq\{1,\ldots,p\}$ with $|S|=2r$, the coefficient of
	\[
		\left( \prod_{i\in\{1,\ldots,p\}\setminus S}a_i^2 \right)\left( \prod_{i=p+1}^{2k}b_i^2 \right)
	\]
	in $e_{k-r}\left(\mathcal{W}_{\{1,\ldots,2k\}\setminus S}\right)^2$ is $\binom{2(k-r)}{k-r}$. To see this, note that for arbitrary subsets $S_1\subseteq\{1,\ldots,p\}\setminus S$ and $S_2\subseteq\{p+1,p+2,\ldots,2k\}$, the coefficient of
	$
		\left( \prod_{i\in S_1}a_i^2 \right)\left( \prod_{i\in S_2}b_i^2 \right)
	$
	in $e_{k-r}\left(\mathcal{W}_{\{1,\ldots,2k\}\setminus S}\right)$ is 1 if $|S_1|+|S_2|=k-r$ and zero otherwise and there are $\binom{2(k-r)}{k-r}$ ways of choosing $S_1$ and $S_2$ subject to $|S_1|+|S_2|=k-r$. It follows that the coefficient of $T^*$ in $e_{2k}(\mathcal{X})^2$ is given by
	\[
		\sum _{r=0}^{\lfloor p/2 \rfloor} 2^{4r}\binom{p}{2r}\binom{2(k-r)}{k-r}.
	\]
	
	Similarly, using (\ref{eq:e2k-1Expansion}) and (\ref{eq:e2k+1Expansion}), one can show that the coefficient of $T^*$ in $e_{2k-1}(\mathcal{X})e_{2k+1}(\mathcal{X})$ is given by
	\[
		4\sum _{r=0}^{\lfloor (p-1)/2 \rfloor} 2^{4r}\binom{p}{2r+1}\binom{2(k-r)-1}{k-r}.
	\]
	
	Next, note that the coefficient of $T^*$ in $\Theta$ is precisely its coefficient in $e_k\left(\mathcal{V}_{\{1,\ldots,2k\}}\right)^2$. Consider arbitrary subsets $S_1\subseteq\{1,2,\ldots,p\}$, $S_2\subseteq\{p+1,p+2,\ldots,2k\}$. If $|S_1|+|S_2|\neq k$, then the coefficient of
	\begin{equation}\label{eq:HalfOfek2}
		\left( \prod_{i\in S_1}a_i^2 \right)\left( \prod_{i\in S_2}b_i^2 \right)
	\end{equation}
	in $e_k\left(\mathcal{V}_{\{1,\ldots,2k\}}\right)$ is zero. If $|S_1|+|S_2|=k$, then the coefficient of (\ref{eq:HalfOfek2}) in $e_k\left(\mathcal{V}_{\{1,\ldots,2k\}}\right)$ is equal to its coefficient in $\prod_{i\in S_1\cup S_2}(a_i^2-b_i^2)$, which equals $(-1)^{|S_2|}$. Since there are $\binom{2k}{k}$ ways of choosing $S_1,S_2$ subject to $|S_1|+|S_2|=k$, it follows that the coefficient of $T^*$ in $\Theta$ is $(-1)^p\binom{2k}{k}$. Therefore, we have shown that the coefficient of $T^*$ in $f$ is
	\begin{multline}\label{eq:T*Coefficient}
		\sum _{r=0}^{\lfloor p/2 \rfloor} 2^{4r}\binom{p}{2r}\binom{2(k-r)}{k-r}  \\
		-4\sum _{r=0}^{\lfloor (p-1)/2 \rfloor} 2^{4r}\binom{p}{2r+1}\binom{2(k-r)-1}{k-r}
		-(-1)^p\binom{2k}{k}.
	\end{multline}
	
	The remainder of the proof is devoted to showing that the quantity given in (\ref{eq:T*Coefficient}) is nonnegative. First suppose that $p$ is odd and write $p=2q+1$. In this case, noting that $\binom{2(k-r)}{k-r}=2\binom{2(k-r)-1}{k-r}$, we may write (\ref{eq:T*Coefficient}) as
	\begin{equation}\label{eq:DEvo2}
		\sum _{r=0}^q 2^{4r}\binom{2(k-r)}{k-r}\left( \binom{2q+1}{2r}-2\binom{2q+1}{2r+1} \right) +\binom{2k}{k}.
	\end{equation}
	If $q=0$, then (\ref{eq:DEvo2}) vanishes, so assume $q\geq1$. At this point, it is helpful to consider two related sums which are explicitly summable:
	\begin{gather}
		\sum _{r=0}^q 2^{2r}\binom{2q+1}{2r}=\frac{1}{2} \left(3^{2 q+1}-1\right),\label{eq:Sum1} \\
		\sum _{r=0}^q 2^{2r}\binom{2q+1}{2r+1}=\frac{1}{4} \left(3^{2 q+1}+1\right).\label{eq:Sum2}
	\end{gather}
	Bearing in mind (\ref{eq:Sum1}) and (\ref{eq:Sum2}), it is convenient to rewrite (\ref{eq:DEvo2}) as
	\begin{equation}\label{eq:DEvo3}
		\binom{2k}{k} \left[ -4q\omega(0)+\sum _{r=1}^q 2^{2r}\omega(r) \left( \binom{2q+1}{2r}-2\binom{2q+1}{2r+1} \right) \right],
	\end{equation}
	where
	\[
		\omega(r):=2^{2r}\binom{2(k-r)}{k-r}\binom{2k}{k}^{-1} \: : \hspace{4mm} r=0,1,\ldots,q.
	\]
	Note that
	\[
		\frac{\omega(r+1)}{\omega(r)}=1+\frac{1}{2 (k- r)-1}>1 \: : \hspace{4mm} r=0,1,\ldots,q-1,
	\]
	i.e. $\omega(r)$ is a strictly increasing function of $r$. In order to determine which terms in (\ref{eq:DEvo3}) are negative and which are positive, we compute
	\[
		\frac{\binom{2q+1}{2r}}{2\binom{2q+1}{2r+1}}=-\frac{1}{2}+\frac{1+q}{1+2 (q- r)}.
	\]
	Therefore, defining $r_0:=\lfloor (4q+1)/6 \rfloor$, we see that the summand in (\ref{eq:DEvo3}) is strictly negative when $r\leq r_0$ and strictly positive when $r>r_0$. Since $\omega(r)$ is a strictly increasing function of $r$, it follows that the expression in (\ref{eq:DEvo3}) is strictly greater than
	\begin{multline*}
		\omega(r_0)\binom{2k}{k} \left[ -4q+\sum _{r=1}^q 2^{2r} \left( \binom{2q+1}{2r}-2\binom{2q+1}{2r+1} \right) \right] \\
		=\omega(r_0)\binom{2k}{k} \left[ 1+\sum _{r=0}^q 2^{2r} \left( \binom{2q+1}{2r}-2\binom{2q+1}{2r+1} \right) \right],
	\end{multline*}
	which, by (\ref{eq:Sum1}) and (\ref{eq:Sum2}), equals zero.
	
	Similarly, if $p$ is even, then, writing $p=2q$, (\ref{eq:T*Coefficient}) becomes
	\begin{equation}\label{eq:DEvo4}
		\sum _{r=0}^q 2^{4r}\binom{2(k-r)}{k-r}\binom{2q}{2r}-2\sum _{r=0}^{q-1} 2^{4r}\binom{2(k-r)}{k-r}\binom{2q}{2r+1} -\binom{2k}{k}.
	\end{equation}
	If $q=0$, then (\ref{eq:DEvo4}) vanishes. If $q=1$, then (\ref{eq:DEvo4}) equals
	\[
		16\binom{2(k-1)}{k-1}-4\binom{2k}{k}=4\binom{2k}{k}(\omega(1)-\omega(0))>0.
	\]
	Hence, assume $q\geq2$. Then we may express (\ref{eq:DEvo4}) as
	\begin{equation}\label{eq:DEvo5}
		\binom{2k}{k} \left[ -4q\omega(0)+\sum _{r=1}^{q-1} 2^{2r}\omega(r) \left( \binom{2q}{2r}-2\binom{2q}{2r+1} \right) +2^{2q}\omega(q) \right].
	\end{equation}
	Since
	\[
		\frac{\binom{2q}{2r}}{2\binom{2q}{2r+1}}=-\frac{1}{2}+\frac{1+2 q}{4 (q-r)},
	\]
	we see that, for $r_0:=\lfloor (4q-1)/6 \rfloor$, the summand in (\ref{eq:DEvo5}) is strictly negative when $r\leq r_0$ and strictly positive when $r>r_0$. It follows that (\ref{eq:DEvo5}) is strictly greater than
	\begin{multline}\label{eq:DEvo6}
		\omega(r_0)\binom{2k}{k} \left[ -4q+\sum _{r=1}^{q-1} 2^{2r} \left( \binom{2q}{2r}-2\binom{2q}{2r+1} \right) +2^{2q} \right] \\
		=\omega(r_0)\binom{2k}{k}\left[ -1+\sum_{r=0}^q2^{2r}\binom{2q}{2r}-2\sum_{r=0}^{q-1}2^{2r}\binom{2q}{2r+1} \right].
	\end{multline}
	Finally, since
	\begin{gather*}
		\sum _{r=0}^q 2^{2r}\binom{2q}{2r}=\frac{1}{2} \left(3^{2q}+1\right), \\
		\sum _{r=0}^{q-1} 2^{2r}\binom{2q}{2r+1}=\frac{1}{4} \left(3^{2q}-1\right),
	\end{gather*}
	we see that the expression given in (\ref{eq:DEvo6}) equals zero.
\end{proof}

\begin{cor}\label{cor:e2ke2l}
	Let $\mathcal{X}:=(a_1\pm ib_1,a_2\pm ib_2,\ldots,a_m\pm ib_m)$, where $a_i,b_i\geq0:i=1,2,\ldots,m$. Then for $1\leq k\leq l\leq m-1$,
	\[
		e_{2k}(\mathcal{X})e_{2l}(\mathcal{X})\geq \sqrt{\frac{l(m-k)}{k(m-l)}}e_{2k-1}(\mathcal{X})e_{2l+1}(\mathcal{X}).
	\]
\end{cor}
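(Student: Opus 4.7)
The plan is to combine Theorem \ref{thm:e2k2} with the odd part of Theorem \ref{thm:AllEven/AllOdd}, bridging an inequality in even-indexed elementary symmetric functions to one mixing even and odd indices. Since $\mathcal{X}$ has length $n=2m$ and consists entirely of complex-conjugate pairs, both theorems apply.

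First I would dispose of the diagonal case $k=l$: then $\sqrt{l(m-k)/(k(m-l))}=1$, and the corollary reduces to $e_{2k}(\mathcal{X})^2\geq e_{2k-1}(\mathcal{X})e_{2k+1}(\mathcal{X})$, which is precisely Theorem \ref{thm:e2k2}.

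For the main case $1\leq k<l\leq m-1$, I would apply Theorem \ref{thm:e2k2} at indices $k$ and $l$ separately and multiply:
\[
\bigl(e_{2k}(\mathcal{X})\,e_{2l}(\mathcal{X})\bigr)^{2}\;\geq\;\bigl(e_{2k-1}(\mathcal{X})\,e_{2l+1}(\mathcal{X})\bigr)\cdot\bigl(e_{2k+1}(\mathcal{X})\,e_{2l-1}(\mathcal{X})\bigr).
\]
The key move is then to control the extra factor $e_{2k+1}(\mathcal{X})e_{2l-1}(\mathcal{X})$ using the odd part of Theorem \ref{thm:AllEven/AllOdd}. With $n=2m$ we have $\lceil n/2\rceil=m$, so (\ref{eq:OddStatement}) reads $P_{2k'+1}P_{2l'+1}\geq P_{2k'-1}P_{2l'+3}$ for $1\leq k'\leq l'\leq m-2$. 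Taking $k'=k$ and $l'=l-1$ (legitimate because $1\leq k\leq l-1\leq m-2$ under our hypotheses) and unpacking $P_{2j+1}=\binom{m-1}{j}^{-1}e_{2j+1}(\mathcal{X})$, one gets
\[
e_{2k+1}(\mathcal{X})\,e_{2l-1}(\mathcal{X})\;\geq\;\frac{\binom{m-1}{k}\binom{m-1}{l-1}}{\binom{m-1}{k-1}\binom{m-1}{l}}\,e_{2k-1}(\mathcal{X})\,e_{2l+1}(\mathcal{X})\;=\;\frac{l(m-k)}{k(m-l)}\,e_{2k-1}(\mathcal{X})\,e_{2l+1}(\mathcal{X}).
\]
Substituting this into the displayed inequality above yields $(e_{2k}e_{2l})^{2}\geq\tfrac{l(m-k)}{k(m-l)}(e_{2k-1}e_{2l+1})^{2}$, and since all $e_i(\mathcal{X})\geq 0$, taking square roots gives the claim.

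There is no genuine obstacle: the only thing to verify is the index bookkeeping, in particular that the shift $l'=l-1$ in (\ref{eq:OddStatement}) is admissible for every $(k,l)$ outside the diagonal, and that the arithmetic of the binomial ratios reproduces the factor $l(m-k)/(k(m-l))$ appearing under the square root.
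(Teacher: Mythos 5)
Your proposal is correct and follows essentially the same route as the paper: handle $k=l$ via Theorem \ref{thm:e2k2}, multiply the two instances of Theorem \ref{thm:e2k2} at $k$ and $l$, and then bound $e_{2k+1}(\mathcal{X})e_{2l-1}(\mathcal{X})$ from below using the odd part of Theorem \ref{thm:AllEven/AllOdd} with the shift $l'=l-1$, whose binomial ratio indeed equals $l(m-k)/(k(m-l))$. The index checks and the arithmetic are all in order.
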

\begin{proof}
	If $k=l$, then the statement reduces to Theorem \ref{thm:e2k2}. If $k<l$, then by Theorems \ref{thm:e2k2} and \ref{thm:AllEven/AllOdd},
	\begin{align*}
		e_{2k}(\mathcal{X})e_{2l}(\mathcal{X}) &\geq \sqrt{e_{2k-1}(\mathcal{X})e_{2k+1}(\mathcal{X})e_{2l-1}(\mathcal{X})e_{2l+1}(\mathcal{X})} \\
		&\geq \sqrt{\frac{l(m-k)}{k(m-l)}}e_{2k-1}(\mathcal{X})e_{2l+1}(\mathcal{X}). \qedhere
	\end{align*}
\end{proof}

This completes the proof of Theorem \ref{thm:InequalitiesSummary}.

\section{Optimality and comparison to the generalised $\lambda$-Newton inequalities}\label{sec:Optimality}

In this section, we will show by example that Theorems \ref{thm:AllEven/AllOdd}, \ref{thm:DifferentParity} and \ref{thm:e2k2} are optimal. We will also compare our results to the corresponding generalised $\lambda$-Newton inequalities.

\begin{ex}
	Let us reconsider the list $\mathcal{X}:=(i,-i,i,-i,\ldots,i,-i)$ of length $2m$, given in Example \ref{ex:i}. We have $P_{2i}(\mathcal{X})=1$ for all $i=0,1,\ldots,m$, which gives equality in (\ref{eq:EvenStatement}).
\end{ex}

\begin{ex}
	Consider the list
	\[
		\mathcal{X}:=( \underbrace{i,-i,i,-i,\ldots,i,-i}_{m-1\: \mathrm{pairs}}\,,t,t),
	\]
	where $t$ is real. We have
	\begin{align}
		e_{2i}(\mathcal{X}) &= \binom{m-1}{i}+t^2\binom{m-1}{i-1} \: : \hspace{4mm} i=0,1,\ldots,m-1,\notag\\
		e_{2i+1}(\mathcal{X}) &= 2t\binom{m-1}{i} \: : \hspace{4mm} i=0,1,\ldots,m-1,\label{eq:eToP}\\
		e_{2m}(\mathcal{X}) &= t^2.\notag
	\end{align}
	Note that (\ref{eq:eToP}) is equivalent to $P_{2i+1}(\mathcal{X})=2t$: $i=0,1,\ldots,m-1$, which gives equality in (\ref{eq:OddStatement}). Furthermore, for all $1\leq k'\leq l'\leq m-1$,
	\[
		\lim_{t\rightarrow0}\frac{e_{2k'-1}(\mathcal{X})e_{2l'}(\mathcal{X})}{e_{2k'-2}(\mathcal{X})e_{2l'+1}(\mathcal{X})}=1
	\]
	and
	\[	
		\lim_{t\rightarrow\infty}\frac{e_{2k'}(\mathcal{X})e_{2l'+1}(\mathcal{X})}{e_{2k'-1}(\mathcal{X})e_{2l'+2}(\mathcal{X})}=1,
	\]
	which shows that (\ref{eq:EvenOddGap}) is optimal. Finally, if $t=\sqrt{\frac{m}{k}-1}$, then for all $1\leq k\leq m-1$,
	\[
		e_{2k}(\mathcal{X})^2=e_{2k-1}(\mathcal{X})e_{2k+1}(\mathcal{X})=4\binom{m-1}{k}^2,
	\]
	giving equality in (\ref{eq:e2k2}).
\end{ex}

Let us now compare the inequalities developed in Section \ref{sec:NewNewtonLikeInequalities} to the corresponding generalised $\lambda$-Newton inequalities (\ref{eq:GeneralisedLambdaNewtonInequalities}). Suppose, for example, that $\mathcal{X}$ consists of 8 complex-conjugate pairs. By Theorem \ref{thm:e2k2},
\begin{equation}\label{eq:e2k2Comp1}
	e_{8}(\mathcal{X})^2\geq e_7(\mathcal{X})e_9(\mathcal{X}).
\end{equation}
This is equivalent to
\[
	E_8(\mathcal{X})^2\geq \left( \frac{8}{9} \right)^2E_7(\mathcal{X})E_9(\mathcal{X}).
\]
Hence, if it is known that each $x_i$ lies in the wedge
\[
	\Omega=\{z\in\mathbb{C}:|\,\mathrm{arg}(z)\,|\leq\cos^{-1}(8/9)\},
\]
then the corresponding $\lambda$-Newton inequality is stronger than (\ref{eq:e2k2Comp1}). Otherwise, (\ref{eq:e2k2Comp1}) is stronger. This wedge is shown in Figure \ref{fig:Wedges} (left).

Note that as the values of $m$ and $k$ in Theorem \ref{thm:e2k2} grow larger, this critical wedge grows narrower. For example, if $\mathcal{X}$ consists of 100 complex-conjugate pairs, then
\[
	e_{100}(\mathcal{X})^2\geq e_{99}(\mathcal{X})e_{101}(\mathcal{X})
\]
is equivalent to
\[
	E_{100}(\mathcal{X})^2\geq \left( \frac{100}{101} \right)^2E_{99}(\mathcal{X})E_{101}(\mathcal{X}).
\]
The corresponding wedge is shown in Figure \ref{fig:Wedges} (right).

\begin{figure}[htb]
\centering
\begin{tikzpicture}
    \node[anchor=south west,inner sep=0] at (0,0) {\includegraphics[width=118mm]{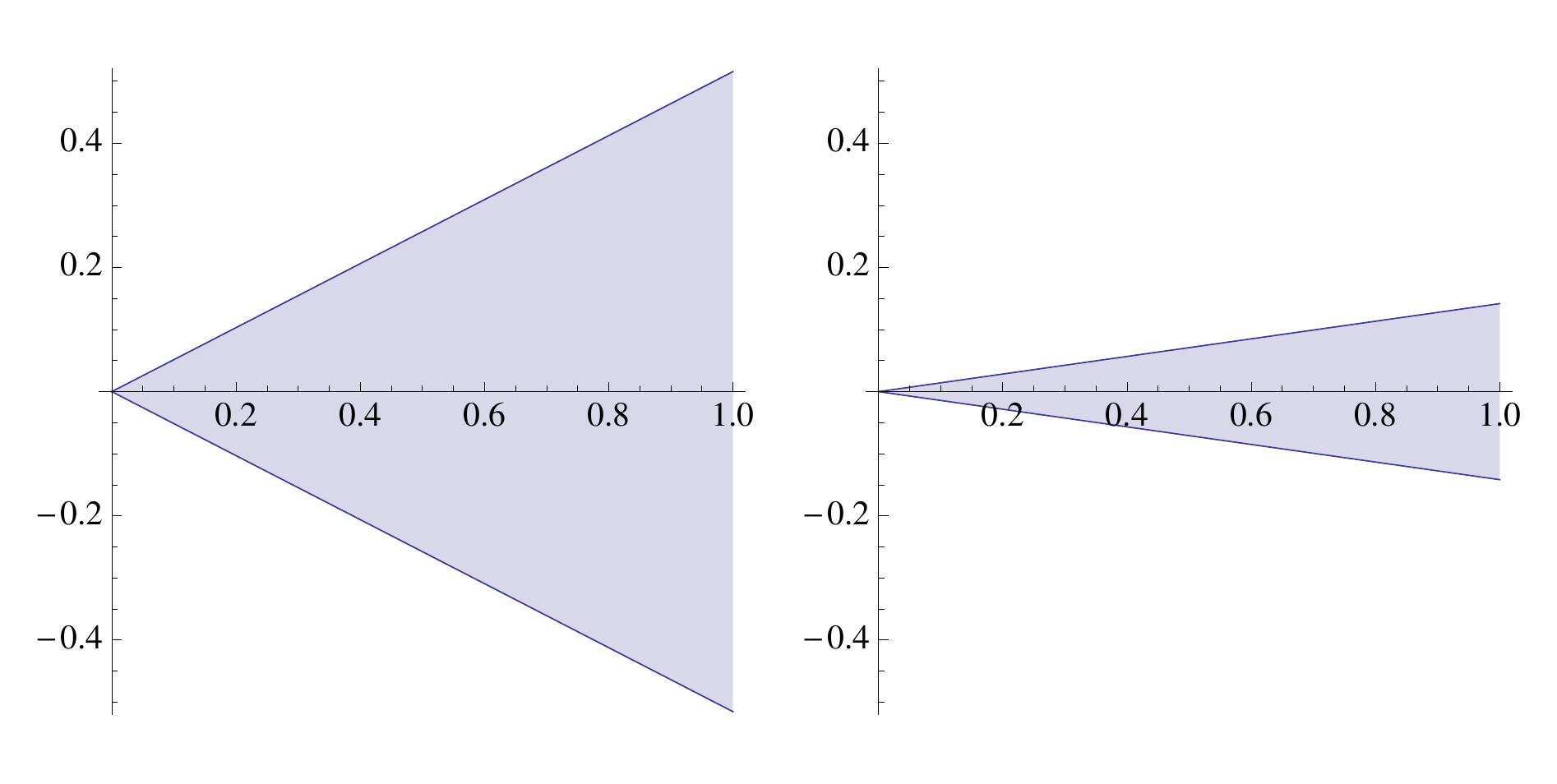}};
    \node (a) at (5.5,3.4) [] {\small$\mathrm{Re}(z)$};
    \node (b) at (0.9,5.7) [] {\small$\mathrm{Im}(z)$};
    \node (c) at (11.3,3.4) [] {\small$\mathrm{Re}(z)$};
    \node (d) at (6.7,5.7) [] {\small$\mathrm{Im}(z)$};
    
     \node (e) at (3.3,0) [] {\small$\{z\in\mathbb{C}:|\mathrm{arg}(z)|\leq\cos^{-1}(8/9)\}$};
     \node (f) at (9.1,0) [] {\small$\{z\in\mathbb{C}:|\mathrm{arg}(z)|\leq\cos^{-1}(100/101)\}$};
\end{tikzpicture}
\caption{Critical wedges}
\label{fig:Wedges}
\end{figure}

In general, it is clear that for any inequality given in Theorem \ref{thm:InequalitiesSummary}, there is a critical value of $\lambda$ (and an associated wedge $\Omega$) such that if each $x_i$ lies in $\Omega$, the associated generalised $\lambda$-Newton inequality gives a stronger result; however, if any of the $x_i$ lie outside of $\Omega$ (or the $x_i$ are unknown), then Theorem \ref{thm:InequalitiesSummary} will yield the stronger result. Furthermore, it is always possible to choose values of $k$, $l$ and $n$ in Theorem \ref{thm:InequalitiesSummary} such that this critical value of $\lambda$ is arbitrarily close to 1 and the corresponding wedge $\Omega$ is arbitrarily narrow. 

\bibliographystyle{amsplain}
\bibliography{Bibliography}

\end{document}